\documentclass[reqno]{amsart}
\usepackage{amssymb,latexsym,amsmath,amsthm,enumerate}
\usepackage[mathscr]{eucal}

\theoremstyle{plain}
\newtheorem{theorem}{Theorem}
\newtheorem{thm}[theorem]{Theorem}
\newtheorem{lem}[theorem]{Lemma}

\newtheorem{remark}[theorem]{Remark}

\theoremstyle{definition}
\newtheorem{defn}[theorem]{Definition}


\newcommand{\Eq}{\operatorname{Eq}}
\DeclareMathOperator{\len}{len}
\DeclareMathOperator{\rel}{rel}
\DeclareMathOperator{\gen}{gen}
\DeclareMathOperator{\rellen}{rellen}


\newcommand{\up}[1]{\textup{#1}}

\newcommand{\tr}{\triangleright}

\newcommand{\bigand}{\operatornamewithlimits{\hbox{\Large$\&$}}}


\begin{document}

\title{Low growth equational complexity}

\author{Marcel Jackson}
\address{Department of Mathematics and Statistics, La Trobe University, Victoria  3086,
Australia \textup{(m.g.jackson@latrobe.edu.au)}} 

\subjclass[2010]{Primary 08B05; Secondary 20M07, 20D99}

\keywords{quasivariety, group, equational complexity, growth}

\begin{abstract}
The equational complexity function $\beta_\mathscr{V}:\mathbb{N}\to\mathbb{N}$ of an equational class of algebras $\mathscr{V}$ bounds the size of equation required to determine membership of $n$-element algebras in $\mathscr{V}$.  Known examples of finitely generated varieties $\mathscr{V}$ with unbounded equational complexity have growth in $\Omega(n^c)$, usually for $c\geq \frac{1}{2}$.  We show that much slower growth is possible, exhibiting $O(\log_2^3(n))$ growth amongst varieties of semilattice ordered inverse semigroups and additive idempotent semirings.   We also examine a quasivariety analogue of equational complexity, and show that a finite group has polylogarithmic quasi-equational complexity function, bounded if and only if all Sylow subgroups are abelian.
\end{abstract}
\thanks{The author was partially supported by ARC Discovery Project DP1094578 and Future Fellowship 120100666}

\maketitle

\section{Introduction}
In this article, an \emph{algebra} means a universal algebra, though our primary focus is on the class of finite groups and finite semilattice-ordered semigroups.  For a fixed signature $\mathscr{S}$ of operations, an \emph{equation} is an expression
${\bf u}\approx {\bf v}$, where ${\bf u}$ and ${\bf v}$ are terms in $\mathscr{S}$.  The equation is \emph{satisfied} in an algebra in the signature $\mathscr{S}$ if all interpretations $\theta$ of the variables into the universe of the algebra results in ${\bf u}\theta={\bf v}\theta$.  

The class of all $\mathscr{S}$-algebras satisfying some given system of equations is called the \emph{variety} defined by the equations.  A variety is always closed under homomorphisms ($\mathbb{H}$), isomorphic copies of subalgebras ($\mathbb{S}$) and direct products ($\mathbb{P}$), and conversely, every $\mathbb{H},\mathbb{S},\mathbb{P}$ closed class of similar algebras is a variety, definable by the equations holding true in all of its members; see Birkhoff~\cite{bir} or a text such as Burris and Sankappanavar \cite{bursan}.  We let~$\mathbb{V}(\mathscr{K})$ denote the the variety generated by a class $\mathscr{K}$, and write $\mathbb{V}({\bf A})$ to denote $\mathbb{V}(\{{\bf A}\})$ when ${\bf A}$ is a single algebra.

A challenging computational problem arises when one wishes to decide membership of a finite algebra ${\bf B}$ in a variety $\mathscr{V}$: even when $\mathscr{V}=\mathbb{V}({\bf A})$ for a finite algebra~${\bf A}$, this problem can be as hard as 2\texttt{EXPTIME}-complete (Kozik \cite{koz09}), and even amongst almost classical algebras, such as semigroups, there are examples for which the problem is \texttt{NP}-hard (Jackson and McKenzie~\cite{jacmck}, Jackson~\cite{jac:flexi}) and co-\texttt{NP}-complete (Kunc, Klima and Polak~\cite{KKP}).  For general varieties~$\mathscr{V}$---even recursively axiomatizable varieties---the problem can be undecidable (see Hirsch and Hodkinson~\cite{hirhod} for example).  Such membership problems are obviously fundamental in the general study of varieties, but they are also an important particular case of the more general situation of deciding membership of finite algebras in ``pseudovarieties'': classes of finite algebras closed under $\mathbb{H}$, $\mathbb{S}$ and taking finitary direct products.  A substantial motivation for this more general case is that many questions in formal languages can be recast in terms of membership problems for semigroup pseudovarieties.  See Almeida \cite{alm}, Eilenberg \cite{eil} or Rhodes and Steinberg~\cite{rhoste} for general theory, and Volkov \cite{vol} for some important concrete example cases where the pseudovariety is the finite part of $\mathbb{V}({\bf A})$ for some finite semigroup~${\bf A}$.

As every variety $\mathscr{V}$ has an equational characterization, a first attempt at deciding membership of a candidate algebra ${\bf B}$ in $\mathscr{V}$ might be to explore satisfaction of these characterizing equations.  Indeed, when $\mathscr{V}$ admits a characterization in terms of finitely many equations, then testing these equations provides a polynomial time algorithm to decide membership.  Even when there is no finite equational characterization for $\mathscr{V}$, the equational theory of $\mathscr{V}$ may still be well behaved enough for an approach in this style.  In particular, if ${\bf B}$ fails some identity of $\mathscr{V}$, it must fail some identity of~$\mathscr{V}$ involving at most as many variables as there are generators for~${\bf B}$.  When $\mathscr{V}$ is locally finite (and in particular, when $\mathscr{V}$ is finitely generated), there always exists a finite set of equations of $\mathscr{V}$ that capture the $n$-variable equational theory (for any $n$); this follows from Birkhoff's Finite Basis Theorem \cite[Theorem V.4.2]{bursan}.  When~$\mathscr{V}=\mathbb{V}({\bf A})$ for a finite algebra ${\bf A}$, it is possible to calculate a concrete bound on the size of the required $n$-variable equations, and thus obtain an in-principle algorithm for deciding membership.  The equational complexity function $\beta_\mathscr{V}$ captures exactly these notions.  

The \emph{equational complexity function} $\beta_\mathscr{V}\colon \omega\to\omega$ of $\mathscr{V}$ is defined by letting $\beta_\mathscr{V}(n)$ be the smallest number $\ell$ such that for every algebra ${\bf B}$ of size less than $n$, if ${\bf B}\notin\mathscr{V}$ then there is an equation of $\mathscr{V}$ failing on ${\bf B}$ and with length at most $\ell$.
Equivalently, $\beta_\mathscr{V}(n)$ is the smallest number $\ell$ such that for algebras of size less than $n$, lying in $\mathscr{V}$ is equivalent to satisfying the equations true in $\mathscr{V}$ of length at most $\ell$.  If $\mathscr{V}$ is the variety generated by a single finite algebra ${\bf A}$ then we also write $\beta_{\bf A}$ in place of $\beta_\mathscr{V}$.

Obviously, the definition of $\beta_\mathscr{V}$ depends on the precise definition of ``length''.  We follow McNulty, Szek\'ely and Willard \cite{MSzW} and define the length to be the number of symbols in the concatenation of the bracket-free prefix expressions for the two terms in the equation.  Thus the law $x\cdot (y\cdot y^{-1})\approx x$ becomes ${\cdot}x{\cdot}y{{}^{-1}}yx$ of size $7$ (noting that the expression ``$-1$'' is here a single operation symbol).

Existing work on equational complexity has had two main focal points.  The first is to identify the limits of fast growth of $\beta_{\bf A}$.  In Kun and V\'ertesi \cite{kunver} it is shown that $\Theta(n^k)$ growth is possible (for any $k\in\mathbb{N}$), while Kozik \cite{koz04} showed that at least exponential growth is possible.  McNulty, Szekely and Willard \cite{MSzW} give numerous concrete examples of algebras whose equational complexity is sandwiched somewhere between linear and quadratic growth, while Jackson and McNulty \cite{jacmcn} give a linear growth rate for the equational complexity of Lyndon's algebra.  

The second focus of existing work relates to a long-standing open problem due to Eilenberg and Sch\"utzenberger~\cite{eilsch}: \emph{if the pseudovariety of ${\bf A}$ can  be defined by finitely many equations, is it true that the variety generated by a finite algebra ${\bf A}$ can be defined by finitely many equations?}   It is a straightforward exercise to verify that (for finite signature), the function $\beta_\mathscr{V}$ is bounded above by a constant if and only if $\mathscr{V}$ can be defined, amongst finite algebras by a finite set of equations.  Thus the Eilenberg-Sch\"utzenberger problem is equivalent to asking: is it true that every finite algebra with bounded equational complexity has a finite basis for its equations?  These connections are explored in~\cite{MSzW} (though there is other work relating to the Eilenberg-Sch\"utzenberger conjecture that avoids discussion of equational complexity).

In Jackson and McNulty  \cite{jacmcn} it is suggested that as well as finding high growth equational complexity, there should be equal interest in finding slow but unbounded growth.  In particular,  algebras of very slow, but unbounded growth,  appear to be more likely related to difficult unresolved issues relating to axiomatizability.   No contributions have been made in this direction to date, with all examples known to the author at the time of writing growing at least at $\Omega(n^c)$, where $c\geq 1/2$ is typical. In the present article we show that a finite naturally semilattice-ordered Clifford semigroup always has equational complexity in $O(\log_2^3(n))$ (where $\log^k(n)$ denotes the polylog $(\log(n))^k$) and identify those with $\beta$-function in $O(1)$; see Theorem~\ref{thm:mainClifford}.  The smallest example with non-constant bounded (but slow) equational complexity has~$9$ elements.  To prove this we begin by introducing a corresponding theory of  \emph{quasi-equational complexity}, which refers to quasi-equations and quasivarieties in place of equations and varieties.  This concept also appears interesting enough in its own right, but is integral to our approach.  We show in Theorem \ref{thm:group} that every finite group has quasi-equational complexity  in $O(\log_2^3(n))$ and identify precisely those with growth in~$O(1)$.  (These ideas appear to have some relationship with the Short Presentation Conjecture of Babai, Goodman, Kantor, Luks and P\'alfy~\cite{bgklp}; see Remark~\ref{rem:short}.)  We then apply methods developed in \cite{jac:flat} to obtain  corresponding results for the equational complexity of naturally semilattice-ordered Clifford semigroups and examples from additive idempotent semirings.

\section{Equations, quasi-equations and quasi-equational complexity}
Throughout, we use lower case letters (with or without subscripts) for variables and boldface lower case letters (with or without subscripts) for generic terms and words built from variables.  Thus ${\bf u}$ will always denote  a word or term in an alphabet of variables, while $u$ will denote an individual variable.  We let $\Eq({\bf A})$ denote the set of all equations true on an algebra ${\bf A}$ (in some fixed, but otherwise arbitrary countably infinite set of variables).  The notation $\Eq_n({\bf A})$ will denote the subset of $\Eq({\bf A})$ consisting of those equations of size at most $n$.

A \emph{quasi-equation} is an expression of the form
\[
\bigg(\bigand_{1\leq i\leq n}{\bf u}_i\approx {\bf v}_i\bigg) \rightarrow {\bf u}\approx {\bf v}
\]
for some $n\geq 0$, where each of the ${\bf u}_i$ and ${\bf v}_i$ are terms, $\bigand$ is logical conjunction and $\rightarrow$ is logical implication.  A quasi-equation is satisfied by an algebra if every interpretation of the variables that leads to the premise of the implication being true also leads to the conclusion being true.  Equations correspond to the case of $n=0$. 
 The class of algebras defined by a system of quasi-equations is known as a \emph{quasivariety} and is closed under~$\mathbb{S}$,~$\mathbb{P}$ and ultraproducts $\mathbb{P}_{\rm u}$.  Every $\mathbb{S},\mathbb{P},\mathbb{P}_{\rm u}$ closed class---an indeed, any $\mathbb{SPP}_{\rm u}$-closed class---is a quasivariety, and when ${\bf A}$ is a finite algebra, then the quasivariety generated by ${\bf A}$ can be written as $\mathbb{SP}({\bf A})$, without ultraproducts.  It is obvious from the syntactic definitions and also the semantic equivalent conditions, that every variety is a quasivariety.  We direct the reader to~\cite[Chapter~5]{bursan} or Gorbunov \cite{gor} for a treatment of quasivarieties and quasi-equations.

Equational complexity generalises to quasi-equational complexity in an obvious way: up to big-O equivalence, again by concatenating all terms in prefix notation.  Thus the quasi-equation $x\cdot y\approx y\cdot x\rightarrow x\approx y$ becomes ${\cdot}xy{\cdot} yxxy$ of length $8$.  
We let $\overline{\beta}_\mathscr{Q}$ denote the quasi-equational complexity function of the quasivariety $\mathscr{Q}$, also writing $\overline{\beta}_{\bf A}$ to denote~$\overline{\beta}_{\mathbb{SPP}_{\rm u}({\bf A})}$.  
For a fixed variety $\mathscr{V}$ (hence also a quasivariety), the relationship $\overline{\beta}_\mathscr{V}(n)\leq \beta_\mathscr{V}(n)$ for all~$n$ follows immediately from the definitions, but in contrast $\beta_{\bf A}$ and $\overline{\beta}_{\bf A}$ may be quite different because the classes $\mathbb{SPP}_{\rm u}({\bf A})$ and~$\mathbb{HSP}({\bf A})$ are, in general, quite different.

\section{Finite groups with polylog quasi-equational complexity.}\label{sec:group}
Ol'shanski\u{\i} \cite{ols} has shown that the quasivariety $\mathbb{SP}({\bf G})$ of a finite group ${\bf G}$ has a finite axiomatization by quasi-equations if and only if all Sylow subgroups of ${\bf G}$ are abelian.  When~${\bf G}$ has a finite basis for its quasi-equations, then the quasi-equational complexity of~${\bf G}$ is bounded by a constant.  When~${\bf G}$ has no finite basis for its quasi-equations (that is, when it contains a nonabelian Sylow subgroup), then 
for all $n$ there is a finite group~${\bf H}$ such that every $n$-generated subgroup of ${\bf H}$ lies in the quasivariety of ${\bf G}$, but that ${\bf H}$ itself does not.  This is shown by Ol'shanski\u{\i} in the discussion following the proof of \cite[Lemma~2]{ols} (where ${\bf H}$ is denoted by $C$).    It follows that $\overline{\beta}_{\bf G}(|H|)>\overline{\beta}_{\bf G}(n)$, so that $\overline{\beta}_{\bf G}$ grows unbounded.  The rest of this section is devoted to showing that this growth can be contained within the class $O(\log_2^3(n))$, thus proving the following theorem.
\begin{thm}\label{thm:group}
Let ${\bf G}$ be a finite group.  If all Sylow subgroups of ${\bf G}$ are abelian, then the quasi-equational complexity function $\overline{\beta}_{\bf G}$ is bounded by a constant.  If ${\bf G}$ contains a nonabelian Sylow subgroup, then $\overline{\beta}_{\bf G}$ is unbounded, but lies in $O(\log_2^3(n))$.
This is true for all subsignatures of $\{\cdot,{}^{-1},1\}$ containing $\cdot$.
\end{thm}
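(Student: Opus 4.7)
\emph{Proof proposal.} The constant bound when all Sylow subgroups of $\mathbf{G}$ are abelian is immediate from the cited Ol'shanski\u\i{} theorem \cite{ols}: a finite quasi-equational axiomatisation of $\mathbb{SP}(\mathbf{G})$ bounds $\overline{\beta}_\mathbf{G}$ by the length of its longest axiom. Unboundedness in the presence of a nonabelian Sylow subgroup is the argument sketched just before the statement: for every $k$, Ol'shanski\u\i{} produces a finite group $\mathbf{H}_k$ whose $k$-generated subgroups all lie in $\mathbb{SP}(\mathbf{G})$ while $\mathbf{H}_k\notin\mathbb{SP}(\mathbf{G})$; a quasi-equation of length $\ell$ mentions at most $\ell$ variables, so any quasi-equation of $\mathbf{G}$ of length $\leq k$ is forced on every $k$-generated subalgebra of $\mathbf{H}_k$ and hence on $\mathbf{H}_k$ itself, giving $\overline{\beta}_\mathbf{G}(|H_k|+1) > k$.

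The substantive claim is the upper bound $\overline{\beta}_\mathbf{G}(n) \in O(\log_2^3(n))$. Fix a finite $\mathbf{B}$ of order at most $n$ with $\mathbf{B}\notin\mathbb{SP}(\mathbf{G})$. Closure of $\mathbb{SP}(\mathbf{G})$ under subdirect products yields a nontrivial normal subgroup $N=\bigcap_{\varphi\colon\mathbf{B}\to\mathbf{G}}\ker\varphi$, and for any $1\neq w\in N$, any presentation $\mathbf{B}=\langle X\mid R\rangle$, and any word $\mathbf{w}$ in $X$ representing $w$, the quasi-equation
\[
\bigand_{\mathbf{r}\in R}\mathbf{r}\approx 1 \;\longrightarrow\; \mathbf{w}\approx 1
\]
fails on $\mathbf{B}$ under the identity substitution and holds on $\mathbf{G}$, because any substitution in $\mathbf{G}$ satisfying $R$ factors through a homomorphism $\mathbf{B}\to\mathbf{G}$ which kills $w$. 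It thus suffices to produce, for each such $\mathbf{B}$, a presentation together with a word $\mathbf{w}$ of combined prefix-length $O(\log_2^3 n)$.

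Here I would exploit the feature of quasi-equations (unavailable in equations) that auxiliary premise variables can name intermediate values of straight-line programs, effectively converting logarithmic bit-length into prefix-length. A strictly ascending chain $\{1\}=H_0<H_1<\dots<H_d=\mathbf{B}$ of generated subgroups with $g_i\in H_i\setminus H_{i-1}$ exists with $d\leq\log_2 n$ by Lagrange; by the Babai--Szemer\'edi reachability theorem, every element of $\mathbf{B}$ admits a straight-line program of length $O(\log_2^2 n)$ over $g_1,\dots,g_d$, and each SLP step $y_j\approx y_{j'}\cdot y_{j''}$ inserted into the premise consumes only a constant number of prefix symbols. Refining the chain to a composition series, each of the $\leq\log_2 n$ factors is either $\mathbb{Z}/p$ or finite nonabelian simple; using classical polycyclic power--conjugate presentations for the cyclic factors and the Guralnick--Kantor--Kassabov--Lubotzky short presentations for the simple factors, together with SLP-encoded liftings of factor relations and conjugation-action data, furnishes a total presentation of prefix-length $O(\log_2^3 n)$, with the witness word $\mathbf{w}$ contributing at most another $O(\log_2^2 n)$. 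The same argument handles the subsignatures omitting $^{-1}$ or $1$, since those constants admit short positive-power SLP representations in a finite group.

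The main obstacle is the per-composition-level bookkeeping: at each level one must realise both the factor's defining relations and the conjugation action of the coset-lift generators on the normal subgroup below, with every lifted word recorded as an SLP of length $O(\log_2^2 n)$, and the numbers and lengths of these relations balanced to sit within the polylog budget. For solvable $\mathbf{B}$ this is the elementary polycyclic presentation; the non-solvable case relies essentially on the known short presentations of finite simple groups, which is the point of contact with the Short Presentation Conjecture of \cite{bgklp} noted in the introduction.
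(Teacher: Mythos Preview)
Your treatment of the bounded and unbounded cases matches the paper, and the basic mechanism---turn a short presentation of $\mathbf{B}$ plus a short word for a witness $w\in\bigcap_\varphi\ker\varphi$ into a separating quasi-equation---is exactly right.  But you miss the key simplification that makes the $O(\log_2^3 n)$ bound elementary, and without it your sketch does not obviously deliver the stated exponent.

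The paper first uses the Oates--Powell theorem to dispose of any $\mathbf{B}\notin\mathbb{V}(\mathbf{G})$ by a constant-length equation (this also covers the case you omit, where $\mathbf{B}$ is not a group at all).  Once $\mathbf{B}\in\mathbb{V}(\mathbf{G})$, the decisive observation is Neumann's result that $\mathbb{V}(\mathbf{G})$ contains only finitely many simple groups up to isomorphism.  Hence every composition factor of $\mathbf{B}$ lies in a \emph{fixed finite list}, so each factor has a bounded-size presentation and bounded diameter.  The lifting lemma then gives, with no SLPs and no external machinery: $O(\log n)$ generators, every element of $\mathbf{B}$ expressible as an honest word of length $O(\log n)$, $O(\log^2 n)$ relations each of length $O(\log n)$, for a total of $O(\log^3 n)$.

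Your route instead attempts to present an arbitrary finite group, invoking Babai--Szemer\'edi and GKKL.  This is precisely the territory of the (open) Short Presentation Conjecture of \cite{bgklp}, and your own bookkeeping reveals the problem: there are $\Theta(\log^2 n)$ conjugation relations $b_j a_i b_j^{-1}={\bf w}_{i,j}$ across the levels, and without bounded composition factors each ${\bf w}_{i,j}$ is an arbitrary element of $H_i$ requiring an SLP of length $O(\log^2 n)$, for $O(\log^4 n)$ in total.  So the $O(\log^3 n)$ claim is not justified by your argument as written.  The fix is not more careful SLP sharing but the reduction to $\mathbf{B}\in\mathbb{V}(\mathbf{G})$, after which the heavy tools are unnecessary.
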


We leave discussion of the final sentence to the end of this section (see Remark~\ref{rem:signature}).  Observe that to complete the proof of the other statements in Theorem \ref{thm:group} it only remains to show that $\overline{\beta}_{\bf G}$ can  be found in $O(\log_2^3(n))$ (for any finite group ${\bf G}$; we will not need to use the fact that ${\bf G}$ contains a nonabelian subgroup).  To prove this we begin by showing that for any fixed finite group ${\bf G}$, and ${\bf H}\in\mathbb{V}({\bf G})$, then if ${\bf H}\notin\mathbb{SP}({\bf G})$ there is a quasi-equation of size $O(\log_2^3(|H|))$ satisfied by ${\bf G}$ and failing on ${\bf H}$.  In particular, if ${\bf G}$ has a nonabelian Sylow subgroup---whence generates a nonfinitely axiomatizable quasivariety---this shows that membership of finite groups ${\bf H}$ in the quasivariety $\mathbb{SP}({\bf G})$ can be verified by testing quasi-equations of size only up to $O(\log_2^3(|H|))$.  We use this to show that the flat extension of the finite group ${\bf G}$ exhibits equational complexity bounded by $O(\log_2^3(|H|))$, but (when ${\bf G}$ has a nonabelian Sylow subgroup) not by any constant.

\begin{remark}\label{rem:short}
The first half of the argument is closely related to the results of Babai, Goodman, Kantor, Luks and P\'alfy \cite{bgklp} who show that it is possible to associate with each finite group ${\bf H}$ a presentation of short size.  The authors of \cite{bgklp} state a \emph{Short Presentation Conjecture}: that the presentation of ${\bf H}$ can be made to be of total size $O(\log_2^3(|H|))$.  
\end{remark}

The Short Presentation Conjecture is not resolved here, but our approach is at least slightly reminiscent of the methods invoked in \cite{bgklp}.  We consider an arbitrary finite group~${\bf H}$ in the variety of a fixed finite group ${\bf G}$ and examine a composition series for ${\bf H}$, lifting presentations for the various simple groups to one for ${\bf H}$.  The precise description is quite different to that in \cite{bgklp}.  Also, because the simple groups arising from the composition series also lie in $\mathbb{V}({\bf G})$, we have access to the following fact. (This folklore lemma was pointed out to the author by Mikhail Volkov.)
\begin{lem}\label{lem:sg}
Let ${\bf G}$ be a finite group.  Up to isomorphism, there are only finitely many finite simple groups in the variety $\mathbb{V}({\bf G})$.
\end{lem}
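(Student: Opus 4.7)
The plan is to prove the stronger assertion that every finite simple group $\mathbf{S}\in\mathbb{V}(\mathbf{G})$ must already be a \emph{section} of $\mathbf{G}$, i.e.\ a quotient of a subgroup of $\mathbf{G}$. Once this is in hand the lemma is immediate, since the finite group $\mathbf{G}$ has only finitely many subgroups, each with only finitely many normal subgroups, so only finitely many sections up to isomorphism.

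First I would reduce to finite direct powers. Since $\mathbf{G}$ is finite, $\mathbb{V}(\mathbf{G})=\mathbb{HSP}(\mathbf{G})$ is locally finite, and the relatively free group on $k$ generators in $\mathbb{V}(\mathbf{G})$ embeds into $\mathbf{G}^{|G|^k}$ (as the subgroup of $\mathbf{G}^{G^{k}}$ generated by the $k$ coordinate projections). Hence every finite $\mathbf{H}\in\mathbb{V}(\mathbf{G})$ is a quotient of a subgroup of some finite direct power of $\mathbf{G}$. Applying this to our finite simple group $\mathbf{S}$, write $\mathbf{S}\cong\mathbf{H}/\mathbf{N}$ for some $\mathbf{H}\le\mathbf{G}^{n}$ and $\mathbf{N}\triangleleft\mathbf{H}$, choosing $n$ minimal.

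The crux is then an elementary dichotomy. If $n=1$ there is nothing to do. If $n\ge 2$, let $\pi\colon\mathbf{G}^{n}\to\mathbf{G}^{n-1}$ be the projection onto the first $n-1$ coordinates and set $\mathbf{K}=\mathbf{H}\cap\ker\pi$; the remaining coordinate embeds $\mathbf{K}$ into $\mathbf{G}$. As $\mathbf{K}\triangleleft\mathbf{H}$, the subgroup $\mathbf{N}\mathbf{K}/\mathbf{N}$ is normal in $\mathbf{H}/\mathbf{N}\cong\mathbf{S}$, and simplicity forces one of two cases. Either $\mathbf{K}\le\mathbf{N}$, in which case the isomorphism theorems give $\pi(\mathbf{H})/\pi(\mathbf{N})\cong(\mathbf{H}/\mathbf{K})/(\mathbf{N}/\mathbf{K})\cong\mathbf{S}$, realising $\mathbf{S}$ as a subquotient of $\mathbf{G}^{n-1}$ and contradicting minimality of $n$; or $\mathbf{N}\mathbf{K}=\mathbf{H}$, in which case $\mathbf{S}\cong\mathbf{N}\mathbf{K}/\mathbf{N}\cong\mathbf{K}/(\mathbf{K}\cap\mathbf{N})$ exhibits $\mathbf{S}$ as a quotient of the subgroup $\mathbf{K}$ of $\mathbf{G}$, completing the induction.

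There is no genuine obstacle here: the argument is just the standard subdirect-product dichotomy combined with the three isomorphism theorems, trading simplicity of $\mathbf{S}$ for a reduction on the number of coordinates. The one place meriting a moment's care is the initial reduction to finite powers, which requires both $\mathbf{S}$ and $\mathbf{G}$ to be finite; everything after that is bookkeeping.
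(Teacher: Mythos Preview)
Your argument is correct. The dichotomy you set up is airtight: $\mathbf{K}$ is normal in $\mathbf{H}$, the image $\mathbf{NK}/\mathbf{N}$ is then a normal subgroup of the simple group $\mathbf{S}$, and the two cases are handled cleanly by the isomorphism theorems. The reduction to finite powers via the free algebra on finitely many generators is also fine.

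The paper does not give an argument at all; it simply cites Claim~51.22 of H.~Neumann's \emph{Varieties of Groups}, which asserts that every simple group in $\mathbb{V}(\mathbf{G})$ has order at most $|G|$. Your proof is essentially a self-contained proof of that very claim (indeed of the slightly sharper fact that such a simple group is a section of $\mathbf{G}$). So the underlying mathematics is the same; the difference is only that you supply the details instead of outsourcing them. What your version buys is independence from an external reference and an explicit structural conclusion (section of $\mathbf{G}$) rather than just a cardinality bound; what the paper's citation buys is brevity.
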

\begin{proof}
Claim 51.22 of H.~Neumann \cite{neu} shows that all simple groups in $\mathbb{V}({\bf G})$ have order at most $|G|$.\end{proof}
   We examine composition series to lift presentations for these to short presentations to arbitrary ${\bf H}\in \mathbb{V}({\bf G})$. This approach is common to Babai et al.~though the precise construction of the presentation is quite different.  
   In a further deviation from Babai et al.~\cite{bgklp}, we will also require the additional property that every element of ${\bf H}$ can be written as a short product of generators: $O(\log_2^3(|H|))$ would suffice, but $O(\log_2(|H|))$ is shown.  

In the following lemma we consider group presentations in the variety of groups, using the convention that relators are single words.  Thus, if ${\bf w}$ is a relator, we mean that ${\bf w}=1$ in the group.
\begin{lem}\label{lem:lift}
\up(The lifting lemma.\up)
Let ${\bf N}\trianglelefteq {\bf M}$ be finite groups and assume that 
\begin{itemize}
\item 
${\bf N}$ has presentation $\langle a_1,\dots,a_k\mid \{{\bf w}_i:i=1,\dots, k'\}\rangle$, and
\item 
${\bf M}/{\bf N}$ has presentation $\langle Nb_1,\dots,Nb_\ell\mid \{N{\bf v}_i:i=1,\dots, \ell'\}\rangle$,
\end{itemize}
where ${\bf w}_i$ are some words  in the alphabet $\{a_1,\dots,a_k\}$, and ${\bf v}_i$ are some words in the 
alphabet $\{b_1,\dots,b_\ell\}$.  
Then ${\bf M}$ is generated by the set $\{a_1,\dots,a_k,b_1,\dots,b_\ell\}$ and can be presented by the following words\up:
\begin{enumerate}
\item ${\bf w}_i$ for $i=1,\dots,k'$\up;
\item \up(for each $i=1,\dots,k$ and $j=1,\dots,\ell$\up) ${\bf w}_{i,j}b_ia_j^{-1}b_i^{-1}$ where ${\bf w}_{i,j}$ is a word in $\{a_1,\dots,a_k\}$ of minimal length such that ${\bf w}_{i,j}b_i=b_ia_j$ \up(the word ${\bf w}_{i,j}$ exists as $b_ia_j\in b_i{\bf N}={\bf N}b_i$\up)\up;
\item \up(for each $i=1,\dots,\ell'$\up)  ${\bf u}_i^{-1}{\bf v}_i$, where ${\bf u}_i$ is a word of minimal length in the alphabet $\{a_1,\dots,a_k\}$ for which ${\bf v}_i={\bf u}_i$.
\end{enumerate}
Moreover, every element of ${\bf M}$ can be written in the form ${\bf w}_a{\bf w}_b$, where ${\bf w}_a$ is a word in the generators $a_1,\dots,a_k$ and ${\bf w}_b$ is a word in the generators $b_1,\dots,b_\ell$.
\end{lem}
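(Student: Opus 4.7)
The plan is two-stage: first verify that the listed relators vanish in ${\bf M}$, then prove these relators suffice via a counting argument showing $|{\bf F}|\le|{\bf M}|$, where ${\bf F}$ denotes the abstract group defined by the candidate presentation. The ``moreover'' clause I would handle right at the start, since it only uses facts about ${\bf M}$: given $g\in{\bf M}$, because the cosets $Nb_1,\dots,Nb_\ell$ generate ${\bf M}/{\bf N}$ one can pick a word ${\bf w}_b$ in the $b_i$'s with $Ng=N{\bf w}_b$; then $g{\bf w}_b^{-1}\in{\bf N}=\langle a_1,\dots,a_k\rangle$ delivers $g={\bf w}_a{\bf w}_b$ with ${\bf w}_a$ a word in the $a_j$'s, and in particular $\{a_1,\dots,a_k,b_1,\dots,b_\ell\}$ generates ${\bf M}$. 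That the three families of relators hold in ${\bf M}$ is then immediate: (1) is the definition of ${\bf w}_i$; (2) amounts to ${\bf w}_{i,j}=b_ia_jb_i^{-1}$, which is the defining property of ${\bf w}_{i,j}$ together with the normality of ${\bf N}$; and (3) is the defining equality ${\bf v}_i={\bf u}_i$.

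For sufficiency, let $\phi\colon{\bf F}\to{\bf M}$ be the canonical surjection, and let ${\bf H}\le{\bf F}$ be the subgroup generated by $a_1,\dots,a_k$. The relators in family (1) hold in ${\bf H}$, so by the universal property of the presentation of ${\bf N}$ there is a surjection $\psi\colon{\bf N}\twoheadrightarrow{\bf H}$ with $a_j\mapsto a_j$; composing with the restriction $\phi|_{{\bf H}}\colon{\bf H}\to{\bf N}$ yields the identity on ${\bf N}$, so $\psi$ is an isomorphism and $|{\bf H}|=|{\bf N}|$. Relators (2) give $b_ia_jb_i^{-1}={\bf w}_{i,j}\in{\bf H}$, hence $b_i{\bf H}b_i^{-1}\subseteq{\bf H}$; combined with the just-proved finiteness of ${\bf H}$, this forces $b_i{\bf H}b_i^{-1}={\bf H}$ and hence $b_i^{-1}{\bf H}b_i={\bf H}$ as well, giving ${\bf H}\trianglelefteq{\bf F}$. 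Relators (3) then say ${\bf v}_i={\bf u}_i\in{\bf H}$ in ${\bf F}$, so the defining relators $N{\bf v}_i$ of ${\bf M}/{\bf N}$ vanish modulo ${\bf H}$ and the assignment $Nb_i\mapsto{\bf H}b_i$ defines a surjective homomorphism ${\bf M}/{\bf N}\twoheadrightarrow{\bf F}/{\bf H}$; therefore $|{\bf F}/{\bf H}|\le|{\bf M}/{\bf N}|$ and $|{\bf F}|=|{\bf H}|\cdot|{\bf F}/{\bf H}|\le|{\bf N}|\cdot|{\bf M}/{\bf N}|=|{\bf M}|$, so $\phi$ is an isomorphism.

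The main obstacle I would expect is the normality step: a priori ${\bf F}$ is only presented, not yet known to be finite, so an inclusion $b_i{\bf H}b_i^{-1}\subseteq{\bf H}$ does not automatically yield the reverse inclusion. The lever that rescues the argument is the early identification $|{\bf H}|=|{\bf N}|$: finiteness of ${\bf H}$ promotes the one-sided conjugation containment into genuine normality, and simultaneously ensures that the final product $|{\bf H}|\cdot|{\bf F}/{\bf H}|$ is meaningful. Everything else is bookkeeping using the universal properties of the two given presentations.
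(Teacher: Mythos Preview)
Your proof is correct and takes a genuinely different route from the paper.  The paper argues \emph{syntactically}: it takes an arbitrary word ${\bf w}$ in the combined alphabet that evaluates to $1$ in ${\bf M}$ and shows, by repeated use of the relators, that ${\bf w}$ can be rewritten to the empty word.  Relators of type~(2) are used to push all $a$-letters to the left of all $b$-letters, yielding ${\bf w}={\bf v}_a{\bf v}_b$; then, since $N{\bf v}_b=N$, relators of type~(3) (with further applications of type~(2) to clear newly created $a$-letters) reduce the $b$-part away, leaving a word in the $a$'s that equals $1$ in ${\bf N}$ and hence follows from type~(1).

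Your approach is the standard \emph{counting} verification: build the abstract group ${\bf F}$, exhibit the surjection $\phi\colon{\bf F}\to{\bf M}$, and bound $|{\bf F}|\le|{\bf M}|$ by sandwiching ${\bf H}=\langle a_1,\dots,a_k\rangle$ between ${\bf N}$ and itself and then bounding $|{\bf F}/{\bf H}|$ by $|{\bf M}/{\bf N}|$.  Your handling of the one subtle point---that $b_i{\bf H}b_i^{-1}\subseteq{\bf H}$ only gives normality once ${\bf H}$ is known to be finite---is clean, and the early identification $|{\bf H}|=|{\bf N}|$ via the two-sided maps is exactly the right lever.  Your argument is arguably tidier than the paper's, which leaves the rewriting procedure (particularly the treatment of inverses of generators) somewhat informal; on the other hand the paper's rewriting viewpoint makes it transparent \emph{how} the normal-form ${\bf w}_a{\bf w}_b$ is actually produced from the relators, which is in the spirit of the length-bookkeeping used downstream in Lemma~\ref{lem:log3bound}.
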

\begin{proof}
First note that the presentation is well-defined and that each of the given words does equal $1$ in ${\bf M}$. This is immediate for Item (1).  For Item (2), as $b_iN=Nb_i$, and $a_j\in N$, we have that $b_ia_j\in Nb_i$, showing the existence of ${\bf w}_{i,j}\in N$ with $b_ia_j={\bf w}_{i,j}b_i$ in~${\bf M}$; note then that ${\bf w}_{i,j}b_ia_j^{-1}b_i^{-1}=1$.  For item (3), observe that as $N{\bf v}_i$ is in the presentation of ${\bf M}/{\bf N}$ we have that ${\bf v}_i\in N$, so that there exists a word ${\bf u}_i$ in the generators $\{a_1,\dots,a_k\}$ with ${\bf v}_i={\bf u}_i$ and hence ${\bf u}_i^{-1}{\bf v}_i=1$.  Thus each of the relator words is equal to $1$ in ${\bf M}$ as required.  Also, every element $h$ of ${\bf M}$ lies in a coset of the form ${\bf N}{\bf w}_b$ of ${\bf M}$, where ${\bf w}_b$ is a product of the elements $b_i$.  Thus there is an element ${\bf w}_a\in {\bf N}$ such that $h={\bf w}_a{\bf w}_b$.  So $\{a_1,\dots,a_k,b_1,\dots,b_\ell\}$ is a set of generators of ${\bf M}$.  It remains to show that if ${\bf w}$ is a word in $\{a_1,\dots,a_k,b_1,\dots,b_\ell\}$ equal to $1$ in ${\bf M}$, then this fact can be deduced using the given list of relators. 

 Assume that ${\bf w}$ is a product in the generators that is equal to $1$.  First observe that the laws of the second kind ensure that ${\bf w}$ can be written in the form ${\bf v}_a{\bf v}_b$, where ${\bf v}_a$ is a product of the elements $a_1,\dots,a_k$ while ${\bf v}_b$ is a product of the elements $b_1,\dots,b_\ell$.  Now as ${\bf w}=1$ we have ${\bf N}{\bf v}_b={\bf N}$.  This may be achieved by use of the laws of the third kind: invoking laws of the second kind freely to move all generated occurrences of generators in $a_1,\dots,a_k$ to the left of any from $b_1,\dots,b_\ell$.  Thus eventually we obtain a deduction of ${\bf v}_a{\bf v}_b={\bf u}_a$, where ${\bf u}_a$ is a product of $a_1,\dots,a_k$ that is equal to $1$ in ${\bf M}$, and hence in ${\bf N}$.  This can be established using laws of the first kind.
\end{proof}

\begin{lem}\label{lem:log3bound}
There is a constant $c$ such that every  finite group ${\bf H}\in \mathbb{V}({\bf G})$ has a presentation $\langle C;R\rangle$ of total length at most $c\log_2^3(|H|)$ and such that every element in $H$ can be written as a product of length at most $2|G|\log_2(|H|)$.
\end{lem}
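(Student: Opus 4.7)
The plan is to build the presentation by induction along a composition series of ${\bf H}$, lifting one factor at a time via the Lifting Lemma (Lemma~\ref{lem:lift}). The key structural input is Lemma~\ref{lem:sg}: up to isomorphism there are only finitely many finite simple groups in $\mathbb{V}({\bf G})$, each of order at most $|G|$. Consequently, I can fix a constant $g$ depending only on ${\bf G}$ (and we may take $g\leq |G|$) with the property that every simple group in $\mathbb{V}({\bf G})$ admits a presentation having at most $g$ generators and at most $g$ relators, each relator of length at most $g$, and each element representable as a word of length at most $g$ in the chosen generators.

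Fix a composition series $\{1\}={\bf H}_0\triangleleft{\bf H}_1\triangleleft\cdots\triangleleft{\bf H}_r={\bf H}$, noting that every factor ${\bf H}_{i+1}/{\bf H}_i$ is simple, lies in $\mathbb{V}({\bf G})$, and has order at least $2$, so $r\leq \log_2(|H|)$. I will construct presentations $\langle C_i; R_i\rangle$ of ${\bf H}_i$ by induction on $i$, maintaining the invariants: $|C_i|\leq gi$; every element of ${\bf H}_i$ is expressible as a word of length at most $gi$ in $C_i$; and the total length of $R_i$ is at most $c_0 i^3$ for a constant $c_0$ depending only on $g$.

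For the induction step, apply Lemma~\ref{lem:lift} with ${\bf N}={\bf H}_i$ and ${\bf M}={\bf H}_{i+1}$, using the fixed constant-size presentation for the simple factor ${\bf H}_{i+1}/{\bf H}_i$. The generator count grows to at most $g(i+1)$, and the ``${\bf w}_a{\bf w}_b$'' decomposition in Lemma~\ref{lem:lift} immediately gives each element of ${\bf H}_{i+1}$ as a product of length at most $gi+g=g(i+1)$, using the induction hypothesis for ${\bf w}_a\in{\bf H}_i$ and the simple-factor bound for ${\bf w}_b$. For the relators: type (1) contributes $\leq c_0 i^3$; type (2) adds at most $|C_i|\cdot g\leq g^2 i$ relators of length at most $gi+3$, contributing $O(i^2)$; type (3) adds at most $g$ relators of length $O(i)$, contributing $O(i)$. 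Choosing $c_0$ large enough up front absorbs these additions and preserves the cubic invariant. After $r\leq \log_2(|H|)$ stages we obtain total relator length $\leq c_0\log_2^3(|H|)$ and every element of ${\bf H}$ expressible as a word of length at most $gr\leq |G|\log_2(|H|)\leq 2|G|\log_2(|H|)$.

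The only delicate step is verifying the length bound on the type (2) relators at stage $i+1$: the word ${\bf w}_{i,j}$ is a minimal-length representative in ${\bf H}_i$ of the element $b_ia_jb_i^{-1}$, so its length is controlled by the induction invariant that each element of ${\bf H}_i$ has a representative of length at most $gi$ in $C_i$. This is what couples the two invariants and forces me to carry them together; if I dropped the element-length invariant, I would lose control of the type (2) relators. Everything else is routine polynomial summation, and no ingenuity in the choice of simple-factor presentations is needed—any fixed choice from the finite list suffices.
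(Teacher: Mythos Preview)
Your proof is correct and follows essentially the same approach as the paper: induct along a composition series using Lemma~\ref{lem:lift}, with Lemma~\ref{lem:sg} supplying constant-size data for the simple factors. The paper's bookkeeping differs only cosmetically---it tracks four separate quantities $\gen(i),\len(i),\rel(i),\rellen(i)$ via recurrences rather than your bundled ``total relator length'' invariant---but the analysis and conclusions are the same. One small caveat: the parenthetical ``we may take $g\le|G|$'' is not obviously true for the \emph{relator count} of the simple factors (a Cayley-table presentation could need up to $|G|^2$ relators), but this only feeds into your type~(3) estimate, which is $O(i)$ for any fixed constant; the places where $g\le|G|$ actually matters---generator count and element word-length---are fine, so the stated $2|G|\log_2(|H|)$ bound and the $O(\log_2^3|H|)$ total are unaffected.
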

\begin{proof}
By Lemma \ref{lem:sg} there are only finitely many finite simple groups in the variety generated by ${\bf G}$, say $\{{\bf S}_1,\dots,{\bf S}_p\}$.  Thus there is a constant bound on the size of a presentation for the ${\bf S}_i$.
  Now consider the composition series $\{e\}={\bf H}_0\trianglelefteq {\bf H}_1
\trianglelefteq {\bf H}_2\trianglelefteq \dots\trianglelefteq {\bf H}_m={\bf H}$.  Observe that $m\leq\log_2(|H|)$, because $|H_{i}|\leq \frac{|H_{i+1}|}{2}$ and that each ${\bf H}_{i+1}/{\bf H}_i\in \mathbb{I}\{{\bf S}_1,\dots,{\bf S}_p\}$ (the isomorphism closure of $\{{\bf S}_1,\dots,{\bf S}_p\}$).  Now Lemma \ref{lem:lift} enables the inductive construction of a presentation for ${\bf H}$: at the $(i+1)$th step, the group ${\bf H}_{i+1}$ plays the role of ${\bf M}$ in Lemma \ref{lem:lift}, and the group ${\bf H}_i$ plays the role of ${\bf N}$.  At completion, a set of generators 
\[
a_{1,1},\dots,a_{1,n_1},\dots,a_{m,1},\dots,a_{m,n_m}
\] 
has been constructed for ${\bf H}$, where at the $(i+1)$th step of the induction, the elements $a_{1,1},\dots,a_{i,n_i}$ played the role of the elements $a_1,\dots,a_k$ in the lemma, while $a_{i+1,1},\dots,a_{i+1,n_{i+1}}$ played the role of the $b_1,\dots,b_\ell$.
\begin{enumerate}
\item Let $\#\gen$ denote the largest generating set for the fixed selection of presentations for $\{{\bf S}_1,\dots,{\bf S}_p\}$.  Inductively let $\gen(1)$ denote the number of generators for ${\bf H}_1\in \{{\bf S}_1,\dots,{\bf S}_p\}$ (so $\gen(1)=n_1$) and $\gen(i+1)$ be the number of generators for ${\bf H}_{i+1}$ as constructed from ${\bf H}_i$ and the quotient ${\bf H}_{i+1}/{\bf H}_i\in \{{\bf S}_1,\dots,{\bf S}_p\}$.  It is routine to see that $\gen(i)=\sum_{j=1}^in_j\leq i\cdot \#\gen$.  We now count the size of this presentation, and the worst case upper bound on the length of a product of generators needed to express elements of $H$.
\item Let $\#\len$ denote the worst case minimal length of any product of generators required to represent elements of $\{{\bf S}_1,\dots,{\bf S}_p\}$.  Inductively, let $\len(1)$ be the worst case minimal length of a word required to represent elements of ${\bf H}_1$, and let $\len(i+1)$ denote the shortest length of a product of generators required to represent elements in ${\bf H}_{i+1}$ as constructed using Lemma \ref{lem:lift}.  Note that by the final statement of Lemma \ref{lem:lift} we have $\len(i+1)\leq \#\len+\len(i)$ so that $\len(i)\leq i\cdot \#\len$.
\item Let $\#\rel$ denote the largest number of relations used in the fixed selection of presentations for $\{{\bf S}_1,\dots,{\bf S}_p\}$. Inductively let $\rel(1)$ denote the number of relations for ${\bf H}_1\in \{{\bf S}_1,\dots,{\bf S}_p\}$ and $\rel(i+1)$ be the number of relations constructed for ${\bf H}_{i+1}$ using Lemma \ref{lem:lift} from ${\bf H}_i$ and the quotient ${\bf H}_{i+1}/{\bf H}_i\in \{{\bf S}_1,\dots,{\bf S}_p\}$.  Note that $\rel(i+1)\leq \rel(i)+\#\gen\cdot \gen(i)+\#\rel$.
\item $\#\rellen$ denotes the maximal length of any relation appearing in the fixed presentations for $\{{\bf S}_1,\dots,{\bf S}_p\}$.  Inductively, $\rellen(1)$ denotes the maximal length of any relation in the presentation for ${\bf H}_1$, and $\rellen(i+1)$ denotes the maximal length of any relation in the presentation constructed for ${\bf H}_{i+1}$.  Note that 
\[
\rellen(i+1)\leq\max\{
\stackrel{(1)}{\overbrace{\rellen(i)}},\stackrel{(2)}{\overbrace{3+\len(i)}},\stackrel{(3)}{\overbrace{\#\rellen+\len(i)}}\}
\]
\end{enumerate}
where the numbering states which case of Lemma \ref{lem:lift} the expression derives from.
We have a system of $4$ simultaneous recurrence relations, however we have already observed easy bounds for $\gen(i)$ and $\len(i)$.  These can be substituted into the recurrence relations for $\rel(i)$ and  $\rellen(i)$.  For $\rel(i)$ we have, after substituting the bound $\#\gen \cdot i$ for $\gen(i)$:
\[
\rel(i+1)\leq \rel(i)+i\cdot \#\gen^2+\, {\#\rel}  \leq i(i + 1)/2 \cdot \#\gen^2 +\, (i+1) \cdot \#\rel
\]
Thus $\rel(m)\in O(\log_2^2(|H|))$.

Finally, for $\rellen(i)$ we observed 
\[
\rellen(i+1)\leq\max\{\rellen(i),3+\len(i),\#\len+\len(i)\}.
\]  
Using $\len(i)\leq i\cdot \#\len$ we obtain a linear bound on the growth of $\rellen(i)$, so that $\#\rellen(m)\in O(\log_2(|H|))$.

Combining all this, at completion, we have obtained a presentation for ${\bf H}={\bf H}_m$ in $O(\log_2(|H|))$ generators, with at most $O(\log_2^2|H|)$ relations, each of maximal length $O(\log_2|H|)$.  Thus the total length of the presentation is in $O(\log_2^3(|H|))$.  Also, every element of ${\bf H}$ can be written as a $O(\log_2(|H|))$ length product of the constructed generating set.
\end{proof}

\begin{lem}\label{lem:shortqe}
For a finite group ${\bf G}$, if ${\bf H}$ is a finite group and ${\bf H}\notin \mathbb{Q}({\bf G})$, then there is a quasi-equation $\phi$ satisfied by ${\bf G}$ and failing on ${\bf H}$ and the total length of $\phi$ is $O(\log_2^3(|H|))$.
\end{lem}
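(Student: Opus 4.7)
The plan is to split the argument into two cases according to whether or not ${\bf H}$ lies in $\mathbb{V}({\bf G})$. If ${\bf H}\notin\mathbb{V}({\bf G})$, the Oates--Powell theorem---every finite group generates a finitely based variety in the signature of groups---immediately yields an equation of ${\bf G}$ of length $O(1)$ failing on~${\bf H}$; viewed as a quasi-equation with empty premise, this comfortably sits inside $O(\log_2^3|H|)$. So the substantive work is in the case ${\bf H}\in\mathbb{V}({\bf G})\setminus\mathbb{Q}({\bf G})$, and this is where I would concentrate the effort.

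Assuming ${\bf H}\in\mathbb{V}({\bf G})\setminus\mathbb{Q}({\bf G})$, I would first invoke Lemma~\ref{lem:log3bound} to present ${\bf H}$ as $\langle C;R\rangle$ with total relator length at most $c\log_2^3(|H|)$, and with every element of ${\bf H}$ representable by a word in $C$ of length $O(\log_2|H|)$. Because $\mathbb{Q}({\bf G})=\mathbb{SP}({\bf G})$ for a finite group and ${\bf H}\notin\mathbb{SP}({\bf G})$, the homomorphisms ${\bf H}\to{\bf G}$ cannot jointly separate points, so the intersection of their kernels is a non-trivial normal subgroup of~${\bf H}$. Choose $h_1\neq h_2$ in ${\bf H}$ with $h_1 h_2^{-1}$ in this intersection, and fix words ${\bf u},{\bf v}$ in $C$ of length $O(\log_2|H|)$ representing $h_1$ and $h_2$ respectively.

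The candidate separating quasi-equation is
\[
\phi:\quad \bigand_{{\bf r}\in R}\bigl({\bf r}\approx {\bf e}\bigr)\;\longrightarrow\;{\bf u}\approx{\bf v},
\]
where ${\bf e}$ stands for a term denoting the identity element, realised as the constant $1$ if available in the signature and otherwise as a fixed short term such as $xx^{-1}$ (contributing only bounded per-relator overhead). That $\phi$ fails on ${\bf H}$ is witnessed by the identity interpretation $C\to H$, under which all relators evaluate to $1_H$ while ${\bf u}$ and ${\bf v}$ evaluate to $h_1\neq h_2$. That $\phi$ holds on ${\bf G}$ follows from the universal property of the presentation: any interpretation $\sigma\colon C\to G$ verifying every relator extends uniquely to a homomorphism $\hat\sigma\colon{\bf H}\to{\bf G}$, and by the choice of $h_1,h_2$ such a homomorphism must identify ${\bf u}$ and ${\bf v}$.

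The length bookkeeping is then routine: the premise of $\phi$ contributes $O(\log_2^3|H|)$ from Lemma~\ref{lem:log3bound}, the conclusion contributes $O(\log_2|H|)$, and the logical connectives add at most a constant per relator, giving $|\phi|\in O(\log_2^3|H|)$ as required. I expect the main obstacle to be not conceptual but notational: handling cleanly the sub-signature issue flagged in Remark~\ref{rem:signature} (where $1$ and/or ${}^{-1}$ may be absent), so that ``${\bf r}\approx{\bf e}$'' can be written uniformly in a way whose bounded per-relator overhead is absorbed by the $c\log_2^3(|H|)$ bound of Lemma~\ref{lem:log3bound} rather than inflating the asymptotics.
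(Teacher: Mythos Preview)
Your proposal is correct and follows essentially the same approach as the paper: the Oates--Powell case split, then the presentation from Lemma~\ref{lem:log3bound} turned into a quasi-equation whose premise encodes the relators and whose conclusion records an element not separated from the identity by any homomorphism into ${\bf G}$. The only cosmetic difference is that the paper chooses a single $h\neq 1$ in the common kernel and writes the conclusion as ${\bf w}\approx 1$, whereas you choose a pair $h_1\neq h_2$ and write ${\bf u}\approx{\bf v}$; these are trivially interchangeable.
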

\begin{proof}
By the theorem of Oates-Powell  \cite{oatpow}, there is a finite basis for the variety generated by ${\bf G}$.  Let us fix such a basis, and denote it by $\Sigma$.  Observe that there is a constant bound on the size of equations in $\Sigma$ (because it's finite) so if ${\bf H}\notin\mathbb{V}({\bf G})$ then ${\bf H}$ fails an identity of $\Sigma$, and we are done.  Now assume that ${\bf H}\in\mathbb{V}({\bf G})$.  
As ${\bf H}\notin\mathbb{Q}({\bf G})$ there is an element $h\in H\backslash\{1\}$ such that every homomorphism from~${\bf H}$ into ${\bf G}$ identifies $h$ with~$1$.  By Lemma \ref{lem:log3bound}, we may select a presentation  $\langle C;R\rangle$ for ${\bf H}$ of total length $O(\log_2^3(|H|))$ and assume that $h$ is written as a product ${\bf w}$ of generators of length at most $O(\log_2(|H|))$.   We now consider the presentation for~${\bf H}$ as the premise of a quasi-identity.  Let $\phi$ be the quasi-identity
\begin{equation*}
\left(\bigand_{{\bf u}\in R}{\bf u}\approx 1\right)\rightarrow {\bf w}\approx 1.\tag{$\dagger$}\label{eq:dagger}
\end{equation*}
Obviously ${\bf H}$ fails \eqref{eq:dagger}, however ${\bf G}\models \phi$ because any evaluation into ${\bf G}$ that satisfies the premise of \eqref{eq:dagger} yields a homomorphism from ${\bf H}$ into ${\bf G}$, and all such homomorphisms identify ${\bf w}$ and $1$.  The total length of $\phi$ is $O(\log_2^3(|H|))$, as required.
\end{proof}
\begin{remark}\label{rem:signature}
Lemma \ref{lem:shortqe} remains true in the signatures $\{\cdot\}$, $\{\cdot,1\}$ and $\{\cdot,{}^{-1}\}$.
\end{remark}
\begin{proof}
We consider the case of $\{\cdot\}$, with the other cases following by using a subset of the argument.  We are given a presentation $\langle A\mid R\rangle$ for a group ${\bf G}$, where $R$ is a set of group words in the alphabet $A$.  To remove ${}^{-1}$ from the signature, first let~$A^{-1}$ denote the set $\{a^{-1}\mid a\in A\}$, which we now treat as an alphabet disjoint to~$A$.  Observe that the law $(xy)^{-1}\approx y^{-1}x^{-1}$ allows us to assume that each ${\bf w}\in R$ is a semigroup word in the alphabet $A\cup A^{-1}$.  Then $\langle A\cup A^{-1}\mid R\cup\{aa^{-1}\mid a\in A\}\rangle$ is a monoid presentation for ${\bf G}$ (in the signature $\{\cdot,1\}$).  To remove $1$ from the signature, add it as a generator and add the relators $g\cdot 1=g$ and $1\cdot g=g$ for each generator $g\in A\cup A^{-1}\cup\{1\}$ to $R$.  It is easy to see that the total length of the presentation is extended by only a constant factor, while the shortest length of a product to represent an element remains unchanged.  The quasi-equation in Equation \eqref{eq:dagger} is amended accordingly, with ``${\bf u}\in R$'' replaced by ``${\bf u}\approx 1\in R$''.
\end{proof}
Lemma \ref{lem:shortqe} completes the proof of Theorem \ref{thm:group}, except for the statements about signature.  Remark \ref{rem:signature} shows that the theorem holds in each of the described signatures.

\section{Polylog equational complexity for a semilattice ordered inverse semigroup}
Recall that an inverse semigroup is an involuted semigroup satisfying $xx^{-1}yy^{-1}\approx yy^{-1}xx^{-1}$; see a text such as Howie \cite{how}, Lawson \cite{law} or Petrich \cite{pet}.  A \emph{naturally semilattice-ordered inverse semigroup} is an inverse semigroup with a second binary operation $\wedge$ satisfying the usual semilattice axioms, along with left and right distributivity of $\cdot$ over $\wedge$, and the laws
\[
x\wedge y \approx x(x\wedge y)^{-1}(x\wedge y)\text{ and }(x\wedge y)zz^{-1}=(xzz^{-1})\wedge (yzz^{-1})
\]
which ties the usual semilattice order defined by $\wedge$ to the usual inverse semigroup theoretic order defined by $\cdot,{}^{-1}$.  We will make particular use of the property 
\[
(x\wedge y)zz^{-1}=x\wedge (yzz^{-1}),\tag{$\ddagger$}\label{eqndd}
\]
which is an easy consequence of the above laws.
An inverse semigroup is said to be a \emph{Clifford semigroup} if the idempotent elements are central: $xx^{-1}y\approx yxx^{-1}$.  

Naturally semilattice-ordered Clifford semigroups can be found in the work of Leech~\cite{lee1,lee2} as well as in the study of algebras of injective partial maps, such as in Jackson and Stokes \cite{jacsto:06}.  

The main result of this section and of the article is the following theorem.
\begin{thm}\label{thm:mainClifford}
Let ${\bf C}$ be a finite naturally semilattice-ordered Clifford semigroup.  If all subgroups of ${\bf C}$ have only abelian Sylow subgroups, then ${\bf C}$ has a finite basis for its equational theory and $\beta_{\bf C}$ is bounded by a constant.  Otherwise, $\beta_{\bf C}$ is unbounded and in $O(\log_2^3(n))$.
\end{thm}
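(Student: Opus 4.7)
The plan is to trade quasi-equational complexity of the maximal subgroups of ${\bf C}$ for equational complexity of ${\bf C}$ itself, using the flat-extension techniques of \cite{jac:flat} in combination with Theorem \ref{thm:group}. First, I would fix a finite set $\Sigma_0$ axiomatising the variety of naturally semilattice-ordered Clifford semigroups; this is a constant-size test satisfied by ${\bf C}$. When measuring $\beta_{\bf C}(n)$ on a candidate algebra ${\bf B}$ of size less than $n$, we may therefore assume that ${\bf B}\models\Sigma_0$ and is itself a naturally semilattice-ordered Clifford semigroup. Consequently both ${\bf B}$ and ${\bf C}$ decompose as strong semilattices of groups: ${\bf B}=\bigsqcup_{f\in E_B} {\bf H}_f$ and ${\bf C}=\bigsqcup_{e\in E}{\bf G}_e$, where each ${\bf G}_e$ is a finite group, and by Lemma \ref{lem:sg} all simple sections arising in the ${\bf G}_e$ come from a single bounded family.

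For the bounded case, suppose every maximal subgroup ${\bf G}_e$ of ${\bf C}$ has only abelian Sylow subgroups. Ol'shanski\u{\i}'s theorem \cite{ols} then supplies a finite quasi-equational basis for each $\mathbb{SP}({\bf G}_e)$ (equivalently, by Theorem \ref{thm:group}, $\overline\beta_{{\bf G}_e}\in O(1)$). I would translate each such quasi-equation $\bigand_i {\bf u}_i\approx {\bf v}_i\to {\bf u}\approx {\bf v}$ into an equation in the semilattice-ordered signature via the standard encoding that uses $\wedge$ to represent the conjunctive premise. The key relations are the natural-order axiom $x\wedge y\approx x(x\wedge y)^{-1}(x\wedge y)$ together with distributivity of $\cdot$ over $\wedge$, which permit a conjunction of group-word equalities to be rewritten as a single identity in the enriched signature. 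The resulting finite list of equations, combined with $\Sigma_0$, then forms a finite equational basis for ${\bf C}$.

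For the polylog case, assume instead that ${\bf B}\notin\mathbb{V}({\bf C})$ and $|B|<n$. Matching the Clifford decompositions of ${\bf B}$ and ${\bf C}$ forces some group component ${\bf H}_f$ to fail to lie in $\mathbb{SP}({\bf G}_e)$ for some $e$, and Lemma \ref{lem:shortqe} produces a quasi-equation of size $O(\log_2^3(|H_f|))\subseteq O(\log_2^3(n))$ that holds on ${\bf G}_e$ and fails on ${\bf H}_f$. Applying the same encoding as above yields an equation of the same asymptotic length, satisfied by ${\bf C}$ and failing on ${\bf B}$. The unboundedness claim when some ${\bf G}_e$ has a nonabelian Sylow subgroup follows because the corresponding unboundedness of $\overline\beta_{{\bf G}_e}$ lifts through the encoding and prevents any constant equational bound for ${\bf C}$.

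The main obstacle will be the fidelity of the quasi-equation-to-equation encoding in both directions. One must check that every translated equation really holds throughout ${\bf C}$ (exploiting centrality of idempotents, distributivity, and the natural-order axiom), and, more delicately, that a failing assignment for the original quasi-equation in ${\bf H}_f$ can be lifted to a failing assignment for the resulting equation in ${\bf B}$ without length blow-up. The first task is routine once the encoding is fixed; the second requires the semilattice meet to isolate precisely the relevant component of ${\bf B}$, while absorbing the irrelevant components without destroying the witness. This is exactly where the flat-extension apparatus of \cite{jac:flat} does the heavy lifting, adapted here from the plain semigroup signature to the richer signature $\{\cdot,{}^{-1},\wedge\}$.
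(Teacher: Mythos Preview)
Your overall strategy---translate group quasi-equations through the flat-extension encoding of \cite{jac:flat}---is correct, but two concrete steps in your plan do not work as stated.

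First, you repeatedly work with a single group component ${\bf G}_e$ of ${\bf C}$.  The quasi-equation produced by Lemma~\ref{lem:shortqe} holds on ${\bf G}_e$, and the encoded equation then holds on $\flat({\bf G}_e)$, but there is no reason it should hold on the other components $\flat({\bf G}_{e'})$, hence no reason it should hold on ${\bf C}$.  The fix is Lemma~\ref{lem:singlegroup}: replace the family $\{{\bf G}_e\}$ by the single group ${\bf G}=\prod_e{\bf G}_e$, so that $\mathbb{V}({\bf C})=\mathbb{V}(\flat({\bf G}))$.  Then ${\bf H}\notin\mathbb{SP}({\bf G})$ is the correct hypothesis for Lemma~\ref{lem:shortqe}, and the encoded equation (Lemma~\ref{lem:shorteq}) automatically holds throughout $\mathbb{V}(\flat({\bf G}))=\mathbb{V}({\bf C})$.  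Your ``matching the Clifford decompositions'' claim---that ${\bf B}\notin\mathbb{V}({\bf C})$ forces some ${\bf H}_f\notin\mathbb{SP}({\bf G}_e)$---is both imprecise (it should be $\mathbb{SP}({\bf G})$) and unproved.

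Second, and more seriously, you try to witness failure of the encoded equation directly in a general ${\bf B}$, and you correctly flag this as the ``main obstacle''.  The difficulty is genuine: in an arbitrary naturally semilattice-ordered Clifford semigroup the meets $h\wedge h'$ of elements of a single group component ${\bf H}_f$ need not collapse to a single absorbing element, so the witnessing assignment for the encoded equation in $\flat({\bf H}_f)$ does not lift to ${\bf B}$ in any obvious way.  The paper sidesteps this entirely by first proving that $\beta$-critical algebras are subdirectly irreducible (Lemma~\ref{lem:bcritical}) and that one may bound $\beta_{\bf A}$ by bounding only on $\beta$-critical algebras (Lemma~\ref{lem:bound}).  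Since the subdirectly irreducible models of $\Sigma_0$ are exactly the flat extensions $\flat({\bf H})$ (Lemma~\ref{lem:translate}), the candidate ${\bf B}$ may be taken to \emph{be} $\flat({\bf H})$, and Lemma~\ref{lem:shorteq} applies verbatim.  This subdirectly-irreducible reduction is the missing idea in your plan; once you have it, the component-matching and the lifting obstacle both disappear.
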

We also observe in Remark \ref{rem:semiring} that one may similarly obtain examples of finite additive idempotent semirings with equational complexity growing within $O(\log_2^3(n))$.

The remainder of the section concerns the relevant definitions and proofs to arrive at Theorem \ref{thm:mainClifford}.  We now employ a method developed in \cite{jac:flat} for translating quasi-equations of partial algebras into equations of some other kind of algebraic structure.  When applied to groups (as partial algebras that happen to be total), one arrives at the class of semilattice-ordered Clifford semigroups; this tight relationship is developed in Sections 7.7 and 7.8 of \cite{jac:flat}.

\begin{defn}
Let ${\bf G}=\langle G;\cdot,{}^{-1}\rangle$ be a group and $0$ be a symbol not appearing in $G$.  Define the \emph{flat extension} of $\flat({\bf G})$ to be the algebra on the set $G\cup \{0\}$ with operations $\cdot,{}^{-1},\wedge$, with $\cdot$ and ${}^{-1}$ extended by letting $0$ be an absorbing element and by 
\[
x\wedge y=\begin{cases}
x&\text{if $x=y$}\\
0&\text{otherwise}.
\end{cases}
\] 
We also allow the same notation for other signatures, in particular, for $\{\cdot\}$.
\end{defn}
The flat extension of a group is a naturally semilattice-ordered Clifford semigroup and is subdirectly irreducible.
It follows from Theorems 5.3 and 7.5 of \cite{jac:flat} that the 
class of subdirectly irreducible naturally semilattice-ordered Clifford 
semigroups is precisely the isomorphism closure of the class of flat extensions of groups.

The following lemma is essentially a trivial consequence of Sections 7.7 and 7.8 of \cite{jac:flat}, along with the fact a quasivariety generated by a finite set of finite groups is equal to the quasivariety generated by a single finite group.  We sketch a proof for completeness.
\begin{lem}\label{lem:singlegroup}
The variety generated by a finite naturally semilattice-ordered Clifford semigroup ${\bf C}$ is equal to one generated by the flat extension of a finite group ${\bf G}$, with ${\bf G}$ obtained as the direct product of a family of subgroups of the $\{\cdot\}$-reduct of ${\bf C}$.
\end{lem}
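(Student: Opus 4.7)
The plan is to combine Birkhoff's subdirect representation theorem with the characterisation of subdirectly irreducible naturally semilattice-ordered Clifford semigroups recalled just before the lemma. First, write the finite algebra ${\bf C}$ as a subdirect product of subdirectly irreducibles; since ${\bf C}$ is finite, these factors are homomorphic images of subalgebras of ${\bf C}$, so up to isomorphism there are only finitely many, say ${\bf S}_1,\dots,{\bf S}_n$. By the cited consequence of Theorems~5.3 and~7.5 of \cite{jac:flat}, each ${\bf S}_i$ is isomorphic to the flat extension $\flat({\bf G}_i)$ of some finite group ${\bf G}_i$. Inspecting that construction shows that ${\bf G}_i$ is recovered as a maximal subgroup (the $\mathcal{H}$-class at an idempotent) of the corresponding quotient of a subalgebra of ${\bf C}$, hence embeds as a subgroup of the $\{\cdot\}$-reduct of ${\bf C}$.

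Set ${\bf G}={\bf G}_1\times\cdots\times{\bf G}_n$; this is the finite group whose flat extension we claim generates $\mathbb{V}({\bf C})$. Since ${\bf C}$ is a subdirect product of the $\flat({\bf G}_i)$, we already have $\mathbb{V}({\bf C})=\mathbb{V}(\flat({\bf G}_1),\dots,\flat({\bf G}_n))$, so the task reduces to proving $\mathbb{V}(\flat({\bf G}_1),\dots,\flat({\bf G}_n))=\mathbb{V}(\flat({\bf G}))$, or equivalently, that these two families satisfy exactly the same identities in the signature $\{\cdot,{}^{-1},\wedge\}$.

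For this equality I invoke the translation machinery of Sections~7.7 and~7.8 of \cite{jac:flat}: an identity in the flat-extension signature holds on $\flat({\bf H})$ if and only if a syntactically associated quasi-equation in the group signature holds on ${\bf H}$. Under this correspondence, $\Eq(\flat({\bf G}_1),\dots,\flat({\bf G}_n))$ maps to the intersection of the quasi-equational theories of the ${\bf G}_i$, that is, to the quasi-equational theory of $\mathbb{SP}({\bf G}_1,\dots,{\bf G}_n)$. The standard fact that, for finite groups, $\mathbb{SP}({\bf G}_1,\dots,{\bf G}_n)=\mathbb{SP}({\bf G}_1\times\cdots\times{\bf G}_n)=\mathbb{SP}({\bf G})$ then identifies this with the quasi-equational theory of ${\bf G}$, which translates back to $\Eq(\flat({\bf G}))$. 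Both varieties thus have identical equational theory and hence coincide.

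The main obstacle is bookkeeping rather than conceptual: one must check that the encoding of \cite{jac:flat} really does interact well with finite products of groups, so that the intersection of group quasi-equational theories on one side corresponds exactly to the join of flat-extension equational theories on the other. Once that translation is invoked as a black box, the proof is a short assembly of Birkhoff's theorem, the subdirectly irreducible characterisation of flat extensions, and the elementary identity $\mathbb{SP}({\bf G}_1,\dots,{\bf G}_n)=\mathbb{SP}(\prod_i {\bf G}_i)$ for finite groups.
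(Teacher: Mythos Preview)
Your overall architecture matches the paper's proof almost exactly: decompose ${\bf C}$ into subdirectly irreducible quotients, identify each as some $\flat({\bf G}_i)$ via \cite[Theorems~5.3,~7.5]{jac:flat}, form ${\bf G}=\prod_i{\bf G}_i$, and then use the equality $\mathbb{SP}(\{{\bf G}_i\})=\mathbb{SP}({\bf G})$ together with \cite[Theorem~5.3]{jac:flat} to conclude $\mathbb{V}(\{\flat({\bf G}_i)\})=\mathbb{V}(\flat({\bf G}))$.

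There is one genuine soft spot. Your sentence ``${\bf G}_i$ is recovered as a maximal subgroup \dots\ of the corresponding quotient of a subalgebra of ${\bf C}$, hence embeds as a subgroup of the $\{\cdot\}$-reduct of ${\bf C}$'' is a non sequitur as written: a subgroup of a \emph{quotient} of ${\bf C}$ has no automatic reason to embed back into ${\bf C}$. The paper supplies the missing mechanism: since ${\bf C}$ is finite, each kernel class of the surjection ${\bf C}\twoheadrightarrow\flat({\bf G}_i)$ contains a $\wedge$-minimal element, and selecting these minimal representatives gives a section realising ${\bf G}_i$ inside ${\bf C}$. You should either supply that argument or cite it; the $\mathcal{H}$-class observation alone does not do the work.
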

\begin{proof}
Let ${\bf C}$ be a finite naturally semilattice-ordered Clifford semigroup, and let ${\bf C}_1,\dots,{\bf C}_n$ be a complete list of its subdirectly irreducible quotients.  By elementary universal algebraic considerations, the variety $\mathscr{V}$ generated by ${\bf S}$ is equal to the variety generated by $\{{\bf C}_1,\dots,{\bf C}_n\}$.  By \cite[Theorems 7.5 and 5.3]{jac:flat}, each of the ${\bf C}_i$ is isomorphic to the flat extension of some group ${\bf G}_i$.  Each  $\flat({\bf G}_i)$ is a homomorphic image of ${\bf C}$, but by taking $\wedge$-minimal elements of ${\bf C}$ from each kernel class of this homomorphism (which exist as ${\bf C}$ is finite), we will show that each ${\bf G}_i$ is in fact isomorphic to a subgroup of ${\bf C}$.  

Let $\phi$ denote a surjective homomorphism from ${\bf C}$ onto $\flat({\bf G}_i)$, and for each $g\in G_i$, let $\overline{g}$ denote a minimal element of $\phi^{-1}(g)$.   As $g\wedge g=g$ in $\flat({\bf G}_i)$, the set $\phi^{-1}(g)$ is a $\{\wedge\}$-subalgebra of ${\bf C}$ so that $\overline{g}$ is the unique minimum of $\phi^{-1}(g)$.  Let $e$ denote the identity element of ${\bf G}_i$.  Then as $e=ee^{-1}$ we have that $\phi^{-1}(e)$ contains an idempotent $\overline{e}\cdot\overline{e}^{-1}$.  As $\overline{e}\leq \overline{e}\cdot\overline{e}^{-1}$ and ${\bf C}$ is an inverse semigroup, it follows that $\overline{e}$ is itself an idempotent.  Next, for any $g\in G$ we have $\overline{g}\cdot \overline{e}\geq \overline{g}$ by the minimality of $\overline{g}$ in $\phi^{-1}(g)$, giving $\overline{g}=\overline{g}\wedge(\overline{g}\cdot \overline{e})=(\overline{g}\wedge \overline{g})\cdot\overline{e}=\overline{g}\cdot\overline{e}$, by law~\eqref{eqndd}.  In particular then, we have $\overline{g^{-1}}\cdot\overline{g}\cdot\overline{e}=\overline{g^{-1}}\cdot\overline{g}$.  However $\overline{g^{-1}}\cdot\overline{g}\geq \overline{g^{-1}g}=\overline{e}$ by the minimality of~$\overline{e}$.  Because $\leq$ is the inverse semigroup order this implies $\overline{g^{-1}}\cdot\overline{g}\cdot\overline{e}=\overline{e}$.  Thus we have $\overline{g^{-1}}\cdot\overline{g}=\overline{e}$ for any $g\in G$. 

Finally then, for any two elements $g,h\in G$ we have  $\overline{g}\cdot \overline{h}\geq \overline{gh}$, while $\overline{gh}\cdot \overline{h^{-1}}\geq \overline{g}$ by minimality.  Then $\overline{gh}= \overline{gh}\cdot \overline{e}=\overline{gh}\cdot\overline{h^{-1}}\cdot \overline{h} \geq \overline{g}\cdot \overline{h} \geq \overline{gh}$ giving equality throughout.  Hence the $\wedge$-minimal elements of $\phi^{-1}(G_i)$ form a $\{\cdot\}$-subalgebra which is obviously isomorphic to~${\bf G}_i$.  

Let ${\bf G}$ be the direct product $\prod_{i}{\bf G}_i$.  The quasivariety generated by ${\bf G}$ is equal to that generated by $\{{\bf G}_1,\dots,{\bf G}_n\}$.  Hence by~\cite[Theorem~5.3]{jac:flat}, the variety $\mathscr{V}$ can equivalently be generated by $\flat({\bf G})$.
\end{proof}
In the case of a finite group (or of a finite naturally semilattice-ordered semigroup), it is not necessary that the operation ${}^{-1}$ be included, as it is a term function in multiplication: by $x^{-1}=x^{d-1}$, where $d$ is the period.  Notice that $x^{d}y$ is a term function acting as a second projection.
The following lemma summarises some of the key facts regarding varieties generated by the flat extensions, in the case where there is a second-projection term.  It is part of Theorems~5.3 and~5.12 of~\cite{jac:flat}.
\begin{lem}\label{lem:translate}\cite{jac:flat}
Let $\mathscr{Q}$ be a quasivariety of algebraic structures \up(of some fixed finite type\up) on which there is a two-variable term $x\tr y$ acting as second projection\up: $\mathscr{Q}\models x\tr y\approx y$.  Then the class of subdirectly irreducible members of $\mathbb{V}(\{\flat({\bf H})\mid{\bf H}\in \mathscr{Q}\})$ is $\mathbb{I}(\{\flat({\bf H})\mid{\bf H}\in \mathscr{Q}\})$.  Moreover, $\mathscr{Q}$ has a finite axiomatization by quasi-equations if and only if $\mathbb{V}(\{\flat({\bf H})\mid{\bf H}\in \mathscr{Q}\})$ has a finite axiomatization by equations.
\end{lem}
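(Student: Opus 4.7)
The plan is to handle the two claims in sequence, with both depending on the structural role of $\wedge$ on flat extensions and of the second-projection term $\tr$. Throughout, abbreviate $\mathscr{W} := \mathbb{V}(\{\flat({\bf H}) : {\bf H} \in \mathscr{Q}\})$. First I would show each $\flat({\bf H})$ is simple (hence subdirectly irreducible): any congruence identifying two distinct $g_1, g_2 \in H$ gives $g_1 = g_1 \wedge g_1 \equiv g_1 \wedge g_2 = 0$, and absorption of $0$ by the base-signature operations then forces total collapse. This yields the easy inclusion $\mathbb{I}(\{\flat({\bf H}) : {\bf H} \in \mathscr{Q}\}) \subseteq \mathscr{W}_{\text{SI}}$.

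For the reverse inclusion, fix a subdirectly irreducible ${\bf S} \in \mathscr{W}$. Using equations valid on every $\flat({\bf H})$ --- for example one ensuring $(x \tr y) \wedge x$ collapses generically --- I would isolate a distinguished absorbing element $0_S \in S$ that is term-definable and preserved by homomorphisms. The hypothesis $x \tr y \approx y$ on $\mathscr{Q}$ ensures that in each $\flat({\bf H})$ the term $\tr$ acts as ``second projection away from zero'': $g \tr h$ equals $h$ if $g \ne 0$ and $0$ otherwise. From this behaviour, transported through $\mathscr{W}$, one deduces that $S \setminus \{0_S\}$ is closed under the base operations, since a product landing in $0_S$ would generate a proper nontrivial congruence incompatible with subdirect irreducibility. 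The resulting reduct ${\bf S}'$ then lies in $\mathscr{Q}$: any defining quasi-equation of $\mathscr{Q}$ violated on ${\bf S}'$ would, through the translation mechanism below, give an equation of $\mathscr{W}$ violated on ${\bf S}$.

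For the axiomatisability equivalence, I would exhibit finite-to-finite translations between quasi-equational axioms of $\mathscr{Q}$ and equational axioms of $\mathscr{W}$. Given a quasi-equation $\bigl(\bigand_{i=1}^n u_i \approx v_i\bigr) \to u \approx v$ of $\mathscr{Q}$, set $g_i := u_i \wedge v_i$ and form the guard term
\[
g := g_1 \tr (g_2 \tr ( \cdots \tr (g_n \tr u) \cdots )),
\]
which, evaluated in $\flat({\bf H})$, equals $u$ when every $u_i = v_i$ takes a nonzero value and equals $0$ otherwise. The single equation $g \wedge v \approx g \wedge u$ then holds throughout $\mathscr{W}$ if and only if the original quasi-equation holds throughout $\mathscr{Q}$: zero-containing assignments trivially satisfy it, while assignments in $H$ reduce to the quasi-equation itself. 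Conversely, any equation $p \approx q$ valid on $\mathscr{W}$ decomposes, by case-analysis on which subterms of $p$ and $q$ take the value $0$, into finitely many quasi-equations valid on $\mathscr{Q}$. Thus a finite basis on one side yields a finite basis on the other.

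The main obstacle is the reverse inclusion in the subdirect irreducibility classification --- specifically, verifying that the reduct ${\bf S}'$ genuinely belongs to $\mathscr{Q}$ rather than merely to $\mathbb{V}(\mathscr{Q})$. Uniquely isolating $0_S$ in a generic SI of $\mathscr{W}$, proving $S \setminus \{0_S\}$ is a subuniverse under the base operations, and verifying the defining quasi-equations of $\mathscr{Q}$ all rest essentially on the second-projection hypothesis. Without $x \tr y \approx y$, pathological subdirectly irreducibles featuring multiple ``zero-like'' elements or non-closure of the non-zero reduct could appear, and the correspondence underpinning both parts of the lemma would break down.
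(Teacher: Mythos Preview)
The paper does not prove this lemma: it is imported wholesale as a combination of Theorems~5.3 and~5.12 of \cite{jac:flat}, and the only commentary offered is that the finite-basis equivalence is established there via \cite[Lemma~5.9]{jac:flat}, an explicit quasi-equation-to-equation translation. So there is no in-paper argument to benchmark against; the paper's own Lemma~\ref{lem:shorteq} is merely a specialised instance of that translation for groups of fixed exponent, not a proof of the present statement.

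Your plan is broadly in line with how \cite{jac:flat} proceeds, but two points deserve attention. First, your guard $g = g_1 \tr (\cdots \tr (g_n \tr u)\cdots)$ nests $u$ but not $v$, so a variable occurring only in $v$ can be sent to $0$ while $g$ stays nonzero; then $g \wedge v = 0 \ne g \wedge u$ and the proposed equation fails on $\flat({\bf H})$ even when the quasi-equation holds on ${\bf H}$. The paper's Lemma~\ref{lem:shorteq} sidesteps exactly this by hypothesis (``every variable \ldots\ appears somewhere in the premise''), and you need either the same normalisation or a guard that captures the variables of $v$ as well. Second, the converse half of the equivalence --- deriving a finite quasi-equational basis for $\mathscr{Q}$ from a finite equational basis for $\mathscr{W}$ --- is left at ``case-analysis on which subterms take the value~$0$''. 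That is the right intuition, but showing the resulting finite family of quasi-equations \emph{axiomatises} $\mathscr{Q}$ (rather than merely holding in it) is a substantive step; together with the SI-classification reverse inclusion you already flag as the main obstacle, this is where the real content of \cite{jac:flat} lies, and your sketch stops short of supplying it.
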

The last sentence in this lemma is proved in \cite{jac:flat} using \cite[Lemma 5.9]{jac:flat}, which gives an explicit translation of quasi-equations in the language of $\mathscr{Q}$ to equations in the language of $\{\flat({\bf H})\mid{\bf H}\in \mathscr{Q}\}$.  The translation involves only a linear adjustment in length when $\tr$ is a fundamental operation, however in the present setting we have~$\tr$ only as  term function, which can potentially result in an exponential increase in the length of the expression.  The issue is easily resolved: the following is a slight recasting of a particular case of \cite[Lemma 5.9]{jac:flat}, that avoids the exponential blowout.

\begin{lem}\label{lem:shorteq}
Let $\bigand_{1\leq i\leq n}({\bf u}_i\approx {\bf v}_i)\rightarrow {\bf u}_0\approx {\bf v}_0$ be a quasi-equation in the language of a single binary operation and with the property that every variable appearing in the expression appears somewhere in the premise of the implication.  Then the quasi-equation 
\[
\rho := \bigand_{1\leq i\leq n}({\bf u}_i\approx {\bf v}_i)\rightarrow {\bf u}_0\approx {\bf v}_0
\] 
holds on a group ${\bf H}$ of exponent $d$ if and only if the following equation holds on~$\flat({\bf H})$\up:
\begin{equation*}
\rho^\flat:=\bigg(\prod_{1\leq i\leq n}({\bf u}_i\wedge {\bf v}_i)\bigg)^d ({\bf u}_0\wedge {\bf v}_0)^d\approx \bigg(\prod_{1\leq i\leq n}({\bf u}_i\wedge {\bf v}_i)\bigg)^d.\tag{$*$}\label{equation}
\end{equation*}
\end{lem}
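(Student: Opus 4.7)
The plan exploits three basic properties of the flat extension $\flat({\bf H})$. First, $\wedge$ detects equality: for any $a,b\in\flat({\bf H})$, we have $a\wedge b=a\in H$ precisely when $a=b\in H$, and $a\wedge b=0$ in every other case. Second, $0$ is absorbing under multiplication, so a product collapses to $0$ as soon as any factor is $0$. Third, since ${\bf H}$ has exponent $d$, every $h\in H$ satisfies $h^d=1$, while $0^d=0$; thus for any term ${\bf t}$, the value ${\bf t}^d$ in $\flat({\bf H})$ is $1$ or $0$ according to whether ${\bf t}$ evaluates inside $H$ or equals $0$. The role of the $d$-th power in $\rho^\flat$ is exactly this: to collapse all ``good'' values to a single normaliser $1$, so that the two sides of the equation can be compared cleanly.

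For the forward direction, assume $\rho$ holds on ${\bf H}$ and fix an assignment of variables into $\flat({\bf H})$. Case-split on whether the product $\prod_{1\le i\le n}({\bf u}_i\wedge{\bf v}_i)$ evaluates to $0$. If it does, both sides of $\rho^\flat$ evaluate to $0$ and we are done. Otherwise every ${\bf u}_i\wedge{\bf v}_i$ lies in $H$, so each ${\bf u}_i={\bf v}_i\in H$; now the hypothesis that every variable appears somewhere in the premise, combined with the fact that $0$ propagates upward through multiplication, forces every variable to take a value in $H$. The restricted assignment is then a bona fide interpretation in ${\bf H}$ satisfying the premise of $\rho$, so ${\bf u}_0={\bf v}_0$ in ${\bf H}$. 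Hence ${\bf u}_0\wedge{\bf v}_0\in H$, its $d$-th power is $1$, and both sides of $\rho^\flat$ reduce to $1$.

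For the converse, assume $\rho^\flat$ holds on $\flat({\bf H})$ and fix an assignment of variables into ${\bf H}$ satisfying the premise of $\rho$. Viewed in $\flat({\bf H})$, each ${\bf u}_i\wedge{\bf v}_i={\bf u}_i\in H$, so the product's $d$-th power is $1$ and $\rho^\flat$ reduces to $({\bf u}_0\wedge{\bf v}_0)^d=1$; by the third observation above, this forces ${\bf u}_0\wedge{\bf v}_0\in H$, equivalently ${\bf u}_0={\bf v}_0$ in ${\bf H}$. The only delicate step is the forward direction's use of the hypothesis that every variable appears in the premise: without it, a variable appearing only in the conclusion could be assigned to $0$, breaking the symmetry between the two sides of $\rho^\flat$. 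Past that bookkeeping, the argument is a straightforward case analysis on the Boolean value of the product of $\wedge$-factors, and I do not expect any serious obstacle.
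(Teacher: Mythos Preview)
Your proposal is correct and follows essentially the same approach as the paper's proof: both case-split on whether the product $\prod_i({\bf u}_i\wedge{\bf v}_i)$ evaluates to $0$, use the variable-appearance hypothesis together with absorption of $0$ to conclude that a non-$0$ value forces the assignment into $H$, and then apply $\rho$ to finish. The only cosmetic difference is that the paper establishes the converse by contrapositive (showing that a failure of $\rho$ in ${\bf H}$ yields a failure of $\rho^\flat$ in $\flat({\bf H})$), whereas you argue it directly.
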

\begin{proof}
Let $0$ denote the bottom element with respect to $\wedge$ in $\flat({\bf H})$.  If $\rho$ fails in~${\bf H}$ under some substitution $\theta$ of the variables into $H$, then considering $\theta$ as an substitution into the flat extension $\flat({\bf H})$, we have the right hand side of $\rho^\flat$ in \eqref{equation} taking the value $\theta((\prod_{1\leq i\leq 1}({\bf u}_i\wedge {\bf v}_i))^d)=1$, while as $\theta({\bf u}_0)\neq \theta({\bf v}_0)$ the left hand side involves $\theta({\bf u}_0\wedge {\bf v}_0)=\theta({\bf u}_0)\wedge \theta({\bf v}_0)=0$.  So $\rho^\flat$  fails on $\flat({\bf H})$. 

Now assume that $\rho$ holds on ${\bf H}$.  It is clear that if the right hand side of $\rho^\flat$ 
takes the value $0$ under some interpretation of the variables in $\flat({\bf H})$, then so does the left hand side.  Let us assume then that $\prod_{1\leq i\leq 1}({\bf u}_i\wedge {\bf v}_i)$ does not take the value $0$ under some interpretation $\theta$ of the variables of the equation into $\flat({\bf H})$.  Note that all variables in $\rho^\flat$ appear in $\prod_{1\leq i\leq 1}({\bf u}_i\wedge {\bf v}_i)$, so in fact $\theta$ is an interpretation into ${\bf H}$.  Also, as $x\wedge y=0$ unless $x=y$, we must have that $\theta({\bf u}_i)=\theta({\bf v}_i)$ in ${\bf H}$ (for every $i=1,\ldots,n$).  As ${\bf H}\models \bigand_{1\leq i\leq n}({\bf u}_i\approx {\bf v}_i)\rightarrow {\bf u}_0\approx {\bf v}_0$ it follows that $\theta({\bf u}_0)=\theta({\bf v}_0)$ also, from which it is easily seen that both the left hand side and the right hand side of $\rho^\flat$ take the value $1$.
\end{proof}
Note that if ${\bf G}$ is fixed and ${\bf H}\in\mathbb{V}({\bf G})$, then the exponent of ${\bf H}$ is bounded by that of ${\bf G}$, hence the quasi-equation found in Lemma \ref{lem:shortqe} translates via Lemma \ref{lem:shorteq} to an equation of size $O\big(\log_2^3(|H|)\big)$.

Let ${\bf A}$ be a finite algebra.  A \emph{jump point} of 
$\beta_{\bf A}$ is a number $n\in \mathbb{N}$ such that $\beta_{\bf A}(n-1)<\beta_{\bf A}(n)$.  
A \emph{$\beta_{\bf A}$-critical} algebra is a finite
algebra ${\bf B}$ such that 
\begin{itemize}
    \item $|B|$ is a jump point for $\beta_{\bf A}$;
 \item ${\bf B}$ satisfies all identities of ${\bf A}$ up to length 
$\beta_{\bf A}(|B|)-1$
\item ${\bf B}$ fails some identity of ${\bf A}$ with length 
$\beta_{\bf A}(|B|)$.  
\end{itemize}
In other words, $\beta$-critical algebras are the algebras that 
force an increase in $\beta_{\bf A}$.

\begin{lem}\label{lem:bcritical}
    Every $\beta_{\bf A}$-critical algebra is subdirectly irreducible.
\end{lem}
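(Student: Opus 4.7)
The plan is a standard subdirect-decomposition reductio. Suppose, toward a contradiction, that ${\bf B}$ is $\beta_{\bf A}$-critical but not subdirectly irreducible. Then the meet of all nontrivial congruences on ${\bf B}$ is the diagonal, so one can pick a family $\{\theta_j\}_{j\in J}$ of nontrivial congruences with $\bigcap_j\theta_j=\Delta_{\bf B}$. The canonical map then embeds ${\bf B}$ into the direct product $\prod_j{\bf B}/\theta_j$, with every factor satisfying $|{\bf B}/\theta_j|<|B|$.

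Next, I would invoke the long failing identity supplied by criticality. Pick $\varphi\in\Eq({\bf A})$ of length $\beta_{\bf A}(|B|)$ that fails on ${\bf B}$. Since satisfaction of equations is preserved by $\mathbb{S}$ and $\mathbb{P}$, if every ${\bf B}/\theta_j$ satisfied $\varphi$ then so would ${\bf B}$; hence some quotient ${\bf B}/\theta_j$ fails $\varphi$ and in particular lies outside $\mathscr{V}=\mathbb{V}({\bf A})$.

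The final step is to apply $\beta_{\bf A}$ at a smaller argument and propagate the failure back to ${\bf B}$. Because $|{\bf B}/\theta_j|<|B|$, the defining property of $\beta_{\bf A}$ furnishes some $\psi\in\Eq({\bf A})$ of length at most $\beta_{\bf A}(|B|-1)$ that fails on ${\bf B}/\theta_j$; the jump-point hypothesis $\beta_{\bf A}(|B|-1)<\beta_{\bf A}(|B|)$ then gives $\beta_{\bf A}(|B|-1)\leq\beta_{\bf A}(|B|)-1$. Since equations are preserved by homomorphic images, failure of $\psi$ on the quotient pulls back to failure of $\psi$ on ${\bf B}$ itself, contradicting the clause in the definition of $\beta_{\bf A}$-critical stating that ${\bf B}$ satisfies every equation of $\Eq({\bf A})$ of length at most $\beta_{\bf A}(|B|)-1$.

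The argument is essentially routine; the only care-point is lifting a failing assignment from a quotient back to ${\bf B}$, which is immediate from the surjectivity of the quotient map. I do not foresee any real obstacle beyond being careful about the convention tying cardinality ranges to the index of $\beta_{\bf A}$ so that ${\bf B}/\theta_j$ is indeed covered by $\beta_{\bf A}(|B|-1)$.
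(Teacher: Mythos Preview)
Your argument is correct and essentially matches the paper's: both pass to a proper quotient of ${\bf B}$ on which the length-$\beta_{\bf A}(|B|)$ equation still fails and then derive a contradiction from the jump-point hypothesis. The only cosmetic difference is that the paper picks a single meet-irreducible congruence separating the witnessing pair (and notes the quotient, being a homomorphic image, still satisfies $\Eq_{\beta_{\bf A}(|B|)-1}({\bf A})$), whereas you take a subdirect decomposition and pull a short failing identity back up to ${\bf B}$; these are dual formulations of the same step, and the indexing concern you flag is indeed only a matter of convention.
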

\begin{proof}
    Let ${\bf B}$ be $\beta_{\bf A}$-critical, and let $u\approx 
    v\in\Eq({\bf A})$ 
    be an equation of complexity $\beta_{\bf A}(|B|)$ failing on 
    ${\bf B}$ under the assignment $\phi$.  If $\phi(u)$ and $\phi(v)$ can be
   separated by a non-trivial congruence $\theta$, then ${\bf B}/\theta$ belongs to the variety (as $|B|$ was a jump point) but fails $u \approx v$, which is a contradiction. Thus every non-trivial congruence collapses $\phi(u)$ and $\phi(v)$ so that  ${\bf B}$ is subdirectly irreducible.
    \end{proof}
 The $\beta_{\mathscr{V}}$-critical algebras determine the function $\beta_{\mathscr{V}}$ in the sense that if in the definition of $\beta_{\mathscr{V}}$, the choice of ${\bf B}\notin \mathscr{V}$  is restricted to the $\beta_{\mathscr{V}}$-critical algebras, then the function defined coincides with $\beta_{\mathscr{V}}$.  
\begin{thm}\label{thm:log3}
Let ${\bf G}$ be a finite  group containing a nonabelian Sylow subgroup.  Then the equational complexity of $\flat({\bf G})$ is not eventually constant but grows within $O(\log_2^3(n))$.
\end{thm}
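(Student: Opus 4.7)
The plan is to bootstrap the quasi-equational bound for ${\bf G}$ obtained in Theorem~\ref{thm:group} up to the equational complexity of $\flat({\bf G})$, using the translation machinery of Lemmas~\ref{lem:translate} and~\ref{lem:shorteq}. For the non-constancy part, I would start from Ol'shanski\u{\i}'s theorem: since ${\bf G}$ has a nonabelian Sylow subgroup, the quasivariety $\mathscr{Q}:=\mathbb{SP}({\bf G})$ has no finite basis of quasi-equations. The term $x\tr y := x^d y$, with $d$ the exponent of ${\bf G}$, acts as second projection throughout $\mathscr{Q}$, so Lemma~\ref{lem:translate} converts this into the statement that $\mathbb{V}(\flat({\bf G}))$ has no finite equational basis. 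Because $\beta_{\flat({\bf G})}$ is nondecreasing, and a bounded equational complexity function is equivalent to finite equational axiomatizability among finite algebras, $\beta_{\flat({\bf G})}$ cannot be eventually constant.

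For the upper bound, Lemma~\ref{lem:bound} reduces the question to providing, for each $\beta_{\flat({\bf G})}$-critical algebra ${\bf B}$ of size less than $n$, a failing equation of $\flat({\bf G})$ of length at most $c\log_2^3(n)$. By Lemma~\ref{lem:bcritical} such a ${\bf B}$ is subdirectly irreducible, and once $|B|$ exceeds a fixed constant threshold ${\bf B}$ already satisfies the short axioms of naturally semilattice-ordered Clifford semigroups together with the exponent law $x^{d+1}\approx x$. This forces ${\bf B}$ to be a subdirectly irreducible naturally semilattice-ordered Clifford semigroup of exponent dividing $d$, so the classification recorded after Lemma~\ref{lem:translate} gives ${\bf B}\cong \flat({\bf H})$ for some finite group ${\bf H}$ with $\exp({\bf H})\mid d$. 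Since ${\bf B}\notin \mathbb{V}(\flat({\bf G}))$, Lemma~\ref{lem:translate} forces ${\bf H}\notin \mathbb{SP}({\bf G})$.

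At this point Lemma~\ref{lem:shortqe}, in the $\{\cdot\}$-signature form from Remark~\ref{rem:signature}, supplies a quasi-equation $\rho$ satisfied by ${\bf G}$, failing on ${\bf H}$, and of total length $O(\log_2^3(|H|))$. Every generator in the underlying presentation necessarily appears in some relator (otherwise ${\bf H}$ would have an infinite free factor), which is precisely the premise-occurrence hypothesis required by Lemma~\ref{lem:shorteq}. Applying that lemma with the fixed constant $d=\exp({\bf G})$ converts $\rho$ into an equation $\rho^\flat$ that holds on $\flat({\bf G})$ and fails on $\flat({\bf H})\cong {\bf B}$, of length bounded by a constant multiple of the length of $\rho$, hence in $O(\log_2^3(n))$.

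The main obstacle I anticipate is the length blow-up in Lemma~\ref{lem:shorteq}: the translation raises a product to the $d$th power, so if $d$ were allowed to grow with ${\bf H}$ one would lose control of the bound. The key observation is that $d$ may be fixed to be $\exp({\bf G})$---a finite constant---because every critical ${\bf B}$ with $|B|$ above the threshold has exponent dividing $d$, as noted above. The remaining ingredients (finitely many small ${\bf B}$ absorbed into the big-$O$ constant, and Lemma~\ref{lem:bound}'s upgrade from $\beta$-critical bounds to a bound on all of $\beta$) are routine bookkeeping once the chain ${\bf B}\cong \flat({\bf H}) \rightsquigarrow \rho \rightsquigarrow \rho^\flat$ is in place.
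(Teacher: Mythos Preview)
Your upper-bound argument is essentially the paper's: reduce via Lemmas~\ref{lem:bcritical} and~\ref{lem:bound} to subdirectly irreducible $\beta$-critical ${\bf B}$, use a fixed finite set of short equations of $\flat({\bf G})$ to force ${\bf B}\cong\flat({\bf H})$, then pull a short quasi-equation from Lemma~\ref{lem:shortqe} and translate via Lemma~\ref{lem:shorteq}. One small looseness: your threshold axioms (Clifford laws plus $x^{d+1}\approx x$) only give $\exp({\bf H})\mid d$, not ${\bf H}\in\mathbb{V}({\bf G})$. The paper instead takes as its threshold the finite system $\Sigma$ obtained by translating the Oates--Powell basis of $\mathbb{V}({\bf G})$ through Lemma~\ref{lem:translate}, which forces ${\bf H}\in\mathbb{V}({\bf G})$. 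This matters because when ${\bf H}\notin\mathbb{V}({\bf G})$, Lemma~\ref{lem:shortqe} returns a bare equation (empty premise) rather than a presentation-based quasi-equation, and then your ``every generator appears in some relator'' justification for the premise-occurrence hypothesis of Lemma~\ref{lem:shorteq} no longer applies. The fix is trivial---enlarge your threshold to include $\Sigma$---and after that the two arguments coincide.

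There is a more substantive gap in your non-constancy argument. From Ol'shanski\u{\i} plus Lemma~\ref{lem:translate} you correctly obtain that $\mathbb{V}(\flat({\bf G}))$ has no finite equational basis, and you then invoke ``bounded $\beta$ is equivalent to finite axiomatisability \emph{among finite algebras}'' to conclude that $\beta_{\flat({\bf G})}$ is unbounded. But the step you need---that no finite basis for the variety entails no finite axiomatisation of its finite part---is precisely the open Eilenberg--Sch\"utzenberger problem discussed in the introduction, so this inference is not available. One closes the gap directly: Ol'shanski\u{\i}'s construction gives, for each $n$, a finite ${\bf H}_n\notin\mathbb{SP}({\bf G})$ whose $n$-generated subgroups all lie in $\mathbb{SP}({\bf G})$; then $\flat({\bf H}_n)\notin\mathbb{V}(\flat({\bf G}))$ by Lemma~\ref{lem:translate}, while every $n$-generated subalgebra of $\flat({\bf H}_n)$ is either trivial or of the form $\flat({\bf K})$ for an $n$-generated subgroup ${\bf K}\le{\bf H}_n$, hence lies in $\mathbb{V}(\flat({\bf G}))$. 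Thus $\flat({\bf H}_n)$ satisfies all $n$-variable equations of $\flat({\bf G})$, witnessing that $\beta_{\flat({\bf G})}$ is unbounded. (The paper's own proof of Theorem~\ref{thm:log3} treats only the upper bound explicitly, so this step is implicit there as well.)
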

\begin{proof}
By Lemma \ref{lem:bcritical} we only need to show that there is a constant $c>0$ such that for $n$ sufficiently large, $n$-element subdirectly irreducible algebras outside of $\mathbb{V}(\flat({\bf G}))$ fail an equation in $\Eq(\flat({\bf G}))$ of size at most $c\log_2^3(n)$.

The Oates-Powell Theorem \cite{oatpow} shows that variety of ${\bf G}$ can be axiomatized by a finite system of equations.  As this variety is a quasivariety and as equations are quasi-equations,  Lemma \ref{lem:translate} shows that there is a finitely system $\Sigma$ of equations in the signature $\{\wedge,\cdot,1\}$ for which the subdirectly irreducible models are (up to isomorphism) precisely the algebras $\flat({\bf H})$, where ${\bf H}\in\mathbb{V}({\bf G})$; the same applies in the signature $\{\wedge,\cdot\}$.

Let $m$ be greater than the size of the longest equation in $\Sigma$, and consider a subdirectly irreducible algebra not in $\mathbb{V}(\flat({\bf G}))$, but satisfying all equations of $\flat({\bf G})$ up to size $<m$.  In particular, ${\bf S}$ satisfies $\Sigma$ so is of the form $\flat({\bf H})$ for some finite group ${\bf H}\in\mathbb{V}({\bf G})$.  As $\flat({\bf H})\notin\mathbb{V}(\flat({\bf G}))$,  Lemma \ref{lem:translate} shows that ${\bf H}\notin \mathbb{Q}({\bf G})$ and so Lemma \ref{lem:shortqe} shows that there is a quasi-equation of size $O(\log_2^3(|H|))$ failing on ${\bf H}$ but holding on ${\bf G}$.  Then Lemma~\ref{lem:shorteq} shows that there is an equation of size $O(\log_2^3(|H|))$ failing on ${\bf H}$ and holding on $\flat({\bf G})$.  
\end{proof}

Finally we may complete the proof of the main result.
\begin{proof}[Proof of Theorem \ref{thm:mainClifford}]
Lemma \ref{lem:singlegroup} implies that $\mathbb{V}({\bf C})$ is equal to $\mathbb{V}(\flat({\bf G}))$, where ${\bf G}$ is a group.  Moreover, ${\bf C}$ has a subgroup with a nonabelian Sylow subgroup if and only if~${\bf G}$ has a nonabelian Sylow subgroup.

By \cite{ols}, the group ${\bf G}$ has a finite basis for its quasi-equations if and only if all of its Sylow subgroups are abelian.  Hence, by Lemma \ref{lem:singlegroup}, it follows that $\mathbb{V}({\bf C})=\mathbb{V}(\flat({\bf G}))$ has a finite basis for its equations if and only if ${\bf C}$ has all of its Sylow subgroups abelian.  When ${\bf C}$ has a finite equational basis we obtain $\beta_{\bf C}\in O(1)$.  Otherwise, Theorem \ref{thm:log3} shows that $\beta_{\bf C}$ is unbounded but in $O(\log_2^3(n))$.
\end{proof}

We now observe a crude lower bound, which when combined with Theorem \ref{thm:mainClifford}, sandwiches $\beta_{{\bf C}}(n)$ between functions of growth rate $\log^{\Theta(1)}(n)$.
\begin{thm}
When ${\bf G}$ is a finite group with a nonabelian Sylow subgroup, the function $\beta_{\flat({\bf G})}$ grows in $\Omega(\sqrt[4]{\log(n)})$.
\end{thm}
\begin{proof}
Let ${\bf H}={\bf H}_n$ be the group discussed at the start of Section \ref{sec:group}.  
Then $\overline{\beta}_{\bf G}(|H_n|)>n$.  
We show that there is a constant $c$ (depending on ${\bf G}$) such that~$2^{cn^{4}}$ bounds $|H|$, from which the claimed lower bound for $\beta_{\flat({\bf G})}(n)$ follows.

Let $m:=(4\binom{n}{2}+4)$.  
The group ${\bf H}$, as constructed by Ol'shanski\u{\i} \cite{ols}, lies in the variety generated by a subgroup $\overline{\bf G}$ of ${\bf G}$ that is a nilpotent group of nilpotency class $2$.  (Note that the subgroup $\overline{{\bf G}}$ is denoted by $H$ in \cite{ols}, while our ${\bf H}$ is denoted by $C$.)  The group~${\bf H}$ is a quotient of the direct product of $s:=2^{|G|}+1$ copies of the $m$-generated free group~${\bf F}$ in the variety generated by $\overline{\bf G}$. 
By Neumann \cite{pneu} and Higman \cite{hig}, the logarithm of $|F|$ (as a function of $m$) grows within $O(m^2)$, so that the same is true of $|F^s|$ and therefore~$|H|$ also.  Thus there is a constant $c$ such that $2^{cn^{4}}$ bounds $|H|$ as required.
\end{proof}

\begin{remark}\label{rem:semiring}
When a finite group ${\bf G}$ is considered in the signature $\{\cdot\}$, then the algebra $\flat({\bf G})$ is an example of an additive idempotent semiring, which also has $O(\log_2^3(n))$ growth equational complexity provided ${\bf G}$ has a nonabelian Sylow subgroup.
\end{remark}
The smallest groups with a nonabelian Sylow subgroup are the $8$-element nonabelian groups.  Thus, the methods in this section produce $9$-element examples of semilattice-ordered Clifford semigroups and semirings with slow growth equational complexity.

\section*{Acknowledgement}
The author wishes to thank the anonymous referee, whose careful reading improved several sections of the article.


\end{document}